\newcommand{\bm}[1]{\boldsymbol{#1}}
\renewcommand{\d}{\mathrm{d}}
\renewcommand{\P}{\mathbb{P}}
\newcommand{\Y}{\bm{Y}}
\newcommand{\mE}{\mathcal{E}}
\newcommand{\mT}{\mathcal{T}}
\newcommand{\wt}[1]{\widetilde{#1}}
\newcommand{\norm}[1]{\left\Vert#1\right\Vert}
\newcommand{\abs}[1]{\left\vert#1\right\vert}
\newcommand{\wh}[1]{\smash{\widehat{#1}}}
\newcolumntype{C}{@{\extracolsep{0.5in}}c@{\extracolsep{0pt}}}
\def\C {\,|\:}
\def\C {\,|\:}
\def\mF{\mathcal{F}}
\def\B{\bm{B}}
\def\b{\bm{\beta}}
\def\Y{\bm{Y}}
\def\X{\bm{X}}
\def\x{\bm{x}}
\def\y{\bm{y}}
\def\b{\bm{\beta}}
\renewcommand{\d}{\mathrm{d}\,}
\renewcommand{\d}{\mathrm{d}}
\renewcommand{\P}{\mathbb{P}}
\def\C{\mbox{\boldmath$C$}}
\def\x{\mbox{\boldmath$x$}}
\def\y{\mbox{\boldmath$y$}}
\def\b{\mbox{\boldmath$b$}}
\def\d{\mbox{\boldmath$d$}}
\def\B{\mbox{\boldmath$B$}}
\def\X{\mbox{\boldmath$X$}}
\def\bOmega{\mbox{\boldmath$\Omega$}}
\renewcommand{\Y}{\bm{Y}}
\def\C {\,|\:}
\def\C {\,|\:}
\def\B{\bm{B}}
\def\b{\bm{\beta}}
\def\X{\bm{X}}
\def\x{\bm{x}}
\def\y{\bm{y}}
\def\b{\bm{\beta}}
\renewcommand{\d}{\mathrm{d}\,}
\theoremstyle{plain}
\newtheorem{theorem}{Theorem}[section]
\newtheorem{lemma}[theorem]{Lemma}
\theoremstyle{definition}
\newtheorem{definition}[theorem]{Definition}
\theoremstyle{remark}
\newcommand{\blind}{0}
\begin{document}

\def\spacingset#1{\renewcommand{\baselinestretch}%
{#1}\small\normalsize} \spacingset{1}


\title{\bf Generalized Bayesian Additive Regression Trees: Theory and Software}

\if0\blind
{
  \author{Enakshi Saha\\
    Department of Epidemiology and Biostatistics\\
    University of South Carolina, Columbia}
} \else {
  \author{Anonymous Author}
} \fi

\maketitle

\if1\blind
{
  \bigskip
  \bigskip
  \bigskip
  \begin{center}
    {\LARGE\bf Title}
\end{center}
  \medskip
} \fi

\bigskip
\begin{abstract}
Bayesian Additive Regression Trees (BART) are a powerful ensemble learning technique for modeling nonlinear regression functions. Although initially BART was proposed for predicting only continuous and binary response variables, over the years multiple extensions have emerged that are suitable for estimating a wider class of response variables (e.g. categorical and count data) in a multitude of application areas. In this paper we describe a generalized framework for Bayesian trees and their additive ensembles where the response variable comes from an exponential family distribution and hence encompasses many prominent variants of BART. We derive sufficient conditions on the response distribution, under which the posterior concentrates at a minimax rate, up to a logarithmic factor. In this regard our results provide theoretical justification for the empirical success of BART and its variants. To support practitioners, we develop a Python package, also accessible in R via reticulate, that implements GBART for a range of exponential family response variables including Poisson, Inverse Gaussian, and Gamma distributions, alongside the standard continuous regression and binary classification settings. The package provides a user-friendly interface, enabling straightforward implementation of BART models across a broad class of response distributions.
\end{abstract}

\noindent%
{\it Keywords:}  Bayesian additive regression trees (BART), exponential family, posterior concentration, minimax rate, nonparametric regression, statistical software

\spacingset{1.45}
\section{Introduction}
\label{sec:intro}

Additive ensemble of Bayesian trees  \citep{chipman1998bayesian, denison1998bayesian}, more popularly known as Bayesian additive regression trees (BART) \citep{chipman2010bart} is a flexible tool that has been extremely successful in a multitude of high dimensional classification and regression tasks. Aided by efficient software implementations, (e.g. R packages \texttt{BART} of \cite{sparapani2019bart}, \texttt{bartMachine} of \cite{bleich2014variable}, \texttt{parallel BART} of \cite{pratola2014parallel}, \texttt{XBART} of \cite{he2019xbart} and \texttt{dbarts} of \cite{dorie2025package}), BART has thrived in a wide range of application areas, including causal inference \citep{hill2011bayesian, hill2013assessing, hahn2017bayesian}, dynamic treatment regimes \citep{li2024dynamic}, interaction detection \citep{du2019interaction}, survival analysis \citep{sparapani2016nonparametric}, time series analysis \citep{taddy2011dynamic, deshpande2020vc} and variable selection \citep{bleich2014variable, linero2018bayesian, liu2018abc, liu2020variable}, to name a few. Even though BART was initially proposed for predicting univariate continuous and binary response variables, due to its flexibility and impressive performance, multiple extensions have emerged over the subsequent years, that are suitable for both univariate and multivariate prediction problems where the response variable is of a wider variety (e.g. categorical and count data \citep{murray2017log}, heteroscedastic responses \citep{pratola2019heteroscedastic}, multivariate skewed responses \citep{um2023bayesian}) and/or the target regression surface is of a constrained nature (e.g. monotone BART \citep{chipman2016high}, varying coefficient BART \citep{deshpande2020vc}, BART with targeted smoothing \citep{starling2020bart} and interval-censored survival data \citep{basak2022semiparametric} etc.).

Despite a long history of empirical success, theoretical studies on Bayesian trees and forests is a relatively new area of research. Recently emerging results along this line are geared towards providing a theoretical perspective on why these models have been so successful in a wide range of classification and regression problems. Among the initial developments, \cite{rockova2019theory} and \cite{rockova2020posterior} demonstrated that the posterior concentration rate of BART equals the minimax rate up to a logarithmic factor for various tree priors. \cite{van2017bayesian} established posterior concentration results for Bayesian dyadic trees around step functions at the minimax rate. Built on these findings, \cite{rockova2020semi} derived a semiparametric Bernstein von-Mises theorem for the BART estimator. Extensions of BART, adapted to various special function types have also been studied from a theoretical perspective: \cite{linero2017bayesian} studied a version of BART suitable for smooth function estimation; \cite{castillo2019multiscale} conducted a multiscale analysis of BART; \cite{jeong2020art} derived posterior concentration results for anisotropic functions. In this paper we study the posterior concentration rates of a generalized version of BART for exponential family responses, extending these results beyond Gaussian outcomes and beyond Hölder continuity to step functions and monotone functions, thereby supplementing this newly emerging area of research.

We formulate a Generalized BART (G-BART) model that extends the existing theoretical developments in several directions. Firstly while existing results focus on Gaussian response variables, we allow the response to come from an exponential family distribution. Hence G-BART can be regarded as nonparametric extensions of the widely popular `Generalized Linear Models' (GLM) \citep{nelder1972generalized}. Many prominent Bayesian CART and BART models used in practice \citep{denison1998bayesian, chipman2010bart, murray2017log}, including the traditional BART model \citep{chipman2010bart}, can be viewed as a special case of this generalized extension. Therefore theoretical properties of these conventional adaptations of BART can be studied as direct corollaries of analogous properties for the G-BART model. Recently, \cite{linero2025generalized} proposed a generalized BART framework for exponential family responses via reversible jump MCMC. While that work focuses on computational methodology, the present paper provides the theoretical foundation for such models by establishing posterior concentration rates.

Secondly, existing results \citep{rockova2020posterior, rockova2019theory, linero2017bayesian} build upon the assumption that the underlying regression function is H\"{o}lder continuous. However given the efficacy of BART models in a multitude of prediction problems with varying degrees of complexity, the assumption of H\"{o}lder continuity seems too restrictive. In this paper we demonstrate that similar posterior optimality results can be obtained for non-smooth functions such as step functions and monotone functions in the exponential family setting, thus extending the theoretical findings on BART beyond the assumption of H\"older continuity. 

Finally, the BART model \cite{chipman2010bart} approximates the regression functions through step functions and assumes that these step heights come from a Gaussian distribution. Most subsequent theoretical and empirical developments have adopted this specification. In the G-BART setup we assume that the distribution of these step heights belong to a broader family of distributions that include both the Gaussian distribution and also some thicker tailed distributions like Laplace. We demonstrate that the BART model maintains a near-minimax posterior concentration rate, if the step heights come from any of the distributions belonging to this broader family, thus providing a wide range of distributional choices without sacrificing fast posterior concentration.

To support practitioners, we develop \texttt{GBART}, a Python package also accessible in R via \texttt{reticulate}, that implements G-BART for a range of exponential family response variables including Poisson, Inverse Gaussian, and Gamma distributions, alongside the standard continuous regression and binary classification settings. The package is built on \texttt{PyMC-BART} \citep{quiroga2022bayesian}, extending it to handle a broader class of exponential family response distributions beyond the Gaussian case. It uses a Bayesian backfitting algorithm adapted from packages such as \texttt{BART} and \texttt{dbarts}, and provides a user-friendly interface modelled on R's \texttt{glm} function, where users specify a response family and link function, enabling straightforward adoption for users familiar with standard generalized linear model software. We demonstrate the performance of \texttt{GBART} on simulated and real datasets, comparing it against \texttt{BART} and \texttt{dbarts}, and illustrate the effect of link function choice for count regression using simulated data.

This paper is organized as follows. In Section \ref{sec:G-BART} we describe the generalized BART model with the associated priors. Section \ref{sec:Posterior_concentration} discusses the notion of posterior concentration, followed by the main theoretical results on G-BART in Section \ref{sec:results}. Broader implications of these results are described in Section \ref{sec: implications}. The \texttt{GBART} software package along with empirical examples are described in Section \ref{sec:software}. Finally, Section \ref{sec:discussion} concludes with a discussion. Proofs of the main theoretical results are provided in the supplementary material after Section \ref{sec:discussion} .

\subsection{Notations:}
For any two real numbers $a$ and $b$, $a \vee b$ will denote the maximum of $a$ and $b$. The notations $\gtrsim$ and $\lesssim$ will stand for ``greater than or equal to up to a constant'' and ``less than or equal to up to a constant'', respectively. The symbol $P_f$ will abbreviate $\int f dP$ and $\P_{f}^{(n)} = \prod_{i=1}^n \P_{f}^i$ will denote the $n$-fold product measure of the $n$ independent observations, where the $i$-th observation comes from the distribution $P_{f}^i$. Let $H(f,g) = \left (\int (\sqrt{f}-\sqrt{g})^2 d\mu \right )^{1/2}$ and $K(f,g) = \int f \log (f/g) d\mu$ denote the Hellinger distance and the Kullback-Leibler divergence, respectively between any two non-negative densities $f$ and $g$ with respect to a measure $\mu$. We define another discrepancy measure $V(f,g) = \int f \left(\log (f/g)\right)^2 d\mu$. Finally, for any set of real vectors $\X_1, \dots, \X_n \in \mathbb{R}^q$ of size $n$, define the average discrepancy measures $H_n(f,g) = \frac{1}{n}\sum_{i=1}^n H\left(f(\X_i), g(\X_i) \right)$, $K_n(f,g) = \frac{1}{n}\sum_{i=1}^n K\left(f(\X_i), g(\X_i) \right)$ and $V_n(f,g) = \frac{1}{n}\sum_{i=1}^n V\left(f(\X_i), g(\X_i) \right)$, where $f(\theta)$ and $g(\theta)$ denote the densities $f$ and $g$ with respect to parameter $\theta$. Also, for any $L_p$ norm $\norm{\cdot}_p$, define the average norm $\norm{f-g}_{p,n} = \frac{1}{n} \sum_{i=1}^n \norm{f(\X_i)-g(\X_i)}_p$.

\section{The Generalized BART Prior}\label{sec:G-BART}
The BART method of \cite{chipman2010bart} is a prominent example of Bayesian ensemble learning, where individual shallow trees are entwined together into a forest, that is capable of estimating a wide variety of nonlinear functions with exceptional accuracy, while simultaneously accounting for different orders of interactions among the covariates.
Building upon BART, we describe a generalized model, where the response variable is assumed to come from an exponential family distribution. For continuous Gaussian response variables, this generalized BART model reduces to the original BART prior of \citep{chipman2010bart}.

The data setup under consideration consists of  $\Y_i = (y_{i1} , \dots , y_{ip} )' \in \mathbb{R}^p$, a set of $p$-dimensional outputs, and $ \X_i = (x_{i1} , \dots , x_{iq} )' \in [0,1]^q $,  a set of $q$ dimensional inputs for $1\leq i\leq n$.  We assume $\Y$ follows some distribution in the exponential family with density of the following form:
    \begin{equation}\label{eq:exponential_pdf}
        P_{f_0}(\Y \C \X)=h(\Y)g\left[f_0(\X)\right]\exp \left [\eta \left (f_0(\X) \right)^T T(\Y) \right],
    \end{equation}
where $h: \mathbb{R}^p \rightarrow \mathbb{R}$, $g: \mathbb{R}^D \rightarrow \mathbb{R}$, $\eta: \mathbb{R}^p \rightarrow \mathbb{R}^J$, $T: \mathbb{R}^p \rightarrow \mathbb{R}^J$ for some integer $J$ and $f_0: \mathbb{R}^q \rightarrow \mathbb{R}^D$, for some integer $D$, are all real valued functions. Among these functions, $h$, $g$, $\eta$ and $T$ are usually {\it known} depending on the nature of the response $\Y$. The function $f_0$, connecting the input $\X$ with the output $\Y$, is the only unknown function and estimating this function is the primary objective of the G-BART estimator. 

We assume that $f_0$ is an unconstrained function, i.e. the range of $f_0$ is the entire space $\mathbb{R}^D$ for some integer $D$. A suitable link function $\Psi(\cdot)$ is used to transform $f_0$ to the natural parameter of the distribution of $\Y$, which is often constrained. For example, for the binary classification problem, $\Y \sim Bernoulli\left (p(\X) \right)$. Here the natural parameter $p(\X) \in (0,1)$ is restricted and hence we can use $\Psi(z) = \frac{1}{1+\exp(-z)}$, the logistic function (or a probit function, as in \cite{chipman2010bart}) to map the unconstrained function $f_0(\X)$ to the natural parameter $p(\X)$. There are usually several different choices for the link function. As we will see in Section \ref{sec: implications}, the BART estimator might have different posterior concentration rates depending on which link function is used to transform the function $f_0$ to the natural parameter of the response distribution.

The univariate regression and the two-class classification problem considered in the original BART paper \citep{chipman2010bart} and many of its important extensions, such as the multi-class classification and the log-linear BART \citep{murray2017log} for categorical and count responses can be formulated as special cases of \eqref{eq:exponential_pdf}. The specific forms of the functions $h, g, \eta$ and $T$ for continuous regression and multi-class classification are given in Table \ref{tab: exponential_table}.

\begin{table}[t]
\caption[Some Examples of Generalized BART Model]{ Univariate Regression (column 2) and Multi-class Classification (column 3), as special cases of the Generalized BART model. $\Phi$ denotes the $Softmax$ function and $\mathcal{M}(\cdot)$ denotes the $Multinomial(1;\cdot)$ distribution. $\left(\{\mathbb{I}\{Y=i\}\}_{i=1}^{p} \right)'$ denotes the row vector where the $i$-th coordinate equals to one if $\Y$ belongs to class $i$ and zero otherwise.  }
\label{tab: exponential_table}
\begin{center}
\begin{small}
\begin{tabular}{cccc}
\toprule
         Response ($\Y$)  & Continuous & Categorical \\
\midrule
Dist.($\Y$)  &     $\mathcal{N}\left (f_0(\X), \sigma^2 \right )$     & $\mathcal{M}\left(\Phi(f_0(\X))\right)$ \\ 
$h(\Y)$   &        $ 1/\sqrt{2\pi \sigma}$   & 1   \\ 
$g\left(f_0(\X)\right)$   &  $\exp\left ( -f_0(\X)^2/2\sigma^2 \right)$     & 1  \\ 
$\eta\left(f_0(\X) \right)$   &  $\left (f_0(\X), 1 \right )$           & $f_0(\X)$            \\ 
$T(\Y)$   &  $\left (2Y/\sigma^2, - Y^2/\sigma^2 \right )$                  & $\left(\{\mathbb{I}\{Y=i\}\}_{i=1}^{p} \right)'$  \\
$f_0(\X)$   &  $\mathbb{R}^q \rightarrow \mathbb{R}$               & $\mathbb{R}^q \rightarrow \mathbb{R}^{p-1}$        \\
\bottomrule
\end{tabular}
\end{small}
\end{center}
\vskip -0.1in
\end{table}

Next a regression tree is used to reconstruct the unknown function $f_0: \mathbb{R}^q \rightarrow \mathbb{R}^D$  via a mapping $f_{\mT,{\b}}:[0,1]^q \rightarrow \mathbb{R}^D$   so that $f_{\mT,{\b}} ( \X ) \approx f_0 ( \X ) $ for $ \X \notin \{ \X_i \}_{i=1}^n $.
Each such mapping  is essentially a step function of the form
\begin{equation}\label{eq:tree_mapping}
f_{\mT,{\b}}(\X)=\sum_{k=1}^K\beta_k\mathbb{I}(\X\in\Omega_k)
\end{equation}
supported on a tree-shaped partition $\mT=\{\Omega_k\}_{k=1}^K$ and  specified by a vector of step heights $\b=(\beta_1,\dots,\beta_K)'$.
The vector $\beta_k \in \mathbb{R}^p$ represents the value of the expected response inside the $k$-th cell of the partition $\Omega_k$.

Bayesian additive trees consist of an ensemble of multiple shallow trees, each of which is intended to be a weak learner, geared towards addressing a slightly different aspect of the prediction problem. These trees are then woven into an {\it additive} forest mapping of the form
\begin{equation}\label{eq:forest_mapping}
f_{\mE,{\B}}(\x)=\sum_{t=1}^Tf_{\mT_t,{\b}_t}(\x),
\end{equation}
where each $f_{\mT_t,{\b}_t}(\x)$ is of the form \eqref{eq:tree_mapping}, $\mE=\{\mT_1,\dots,\mT_T\}$ is an ensemble of $T$ trees and $\B=\{\b_1,\dots,\b_T\}'$ is a collection of jump sizes corresponding to the $T$ trees. 

Since each individual member of the approximating space is a step function of the form \eqref{eq:forest_mapping}, supported on a Bayesian additive forest, the prior distribution should include three components: (i) a prior $\pi(T)$ on the number of trees $T$ in the ensemble, (ii) a prior on individual tree partitions $\pi(\mT)$ and their collaboration within the ensemble and (iii) given a single tree partition $\mT$, a prior $\pi(\b\C\mT)$ has to be imposed on the individual step heights $\b$. 

In this paper we follow the recommendation by \cite{chipman2010bart} and assume the number of trees $T$ to be fixed at a large value (e.g. $T=200$ for regression and $T=50$ for classification). This is equivalent to assigning a degenerate prior distribution on $T$, where all probability mass is concentrated on a single positive integer. Alternatively, one can also assign a prior with higher dispersion, as in \cite{rockova2020posterior} and \cite{linero2017bayesian} and replicate the steps of the proofs provided in the appendix with minor modifications. 

Given the total number of trees in the ensemble, individual trees are assumed to be independent and identically distributed with some distribution $\pi(\mT)$. This reduces the prior on the ensemble to be of the form
\begin{equation}\label{eq:joint_prior}
\pi(\mE,\B)=\prod_{t=1}^T\pi(\mT_t)\pi(\b_t\C\mT_t),
\end{equation}
where $\pi(\mT_t)$ is the prior probability of a partition $\mT_t$, while $\pi(\b_t\C\mT_t)$ is the prior distribution over the jump sizes.
The specific forms of the priors $\pi(\mT)$ and $\pi(\b \C \mT)$ are described below.

\subsection{\bf Prior on partitions}\label{sub:prior_tree} 

We consider two distinct prior distributions on the following class of step functions
\begin{align}
\mF = \{f_{\mE,{\B}} \text{ of the form \eqref{eq:forest_mapping} for some $\mE$ and $\B$}\}.
\end{align}
Specifically, the prior on individual tree partitions $\pi(\mT)$ follow the specifications proposed by \cite{chipman1998bayesian} or by \cite{denison1998bayesian}, respectively. The posterior concentration results discussed in Section \ref{sec:results} are applicable to both these priors. \cite{chipman1998bayesian} specifies the prior over trees implicitly as a tree generating stochastic process, described as follows:
\begin{enumerate}
\item Start with a single leaf (a root node) encompassing the entire covariate space.
\item Split a terminal node, say $\Omega$, with a probability
\begin{equation}\label{eq:p_split}
p_{split}(\Omega) \propto \alpha^{-d(\Omega)} \text{ for some $0 < \alpha < 1/2$.}
\end{equation}
where $d(\Omega)$ is the depth of the node $\Omega$ in the tree architecture. This choice, motivated by \citep{rockova2019theory}, is slightly different from the original prior of \cite{chipman1998bayesian}, who assumed $p_{split}(\Omega) \propto \alpha/(1+d(\Omega))^{-\beta}$ for $\beta \ge 0$. \footnote{The reason behind this modification is that the original BART prior of \cite{chipman2010bart} does not decay at a fast enough rate. However since we examine only sufficient (but not necessary) conditions for optimal posterior concentration, our results do not guarantee that the original prior is inherently worse than the modified prior. The main obstacle to extending the current proof to the original prior is that its splitting probability does not decay fast enough to satisfy condition (C3). Addressing this would likely require a more refined analysis of the prior tail behavior, which we leave for future work.}
\item If the node $\Omega$ splits, assign a splitting rule  and create left and right children nodes. The splitting rule consists of picking a split variable $j$ uniformly from available directions $\{1,\dots, p\}$ and
picking a split point $c$ uniformly at random from the data values $\{x_{ij} : \X_i \in \Omega\}$ falling within the current cell $\Omega$.
\end{enumerate}

A description of the prior proposed by \cite{denison1998bayesian} is given in Section \ref{denison_prior} in the supplementary material.
\subsection{\bf Prior on step heights}\label{sub:prior_step} 
We impose a broad class of priors on the step heights that incorporate the corresponding component of the classical BART model as a special case. Given a tree partition $\mT_t$ with $K_t$ steps, \cite{chipman2010bart} considers identically distributed independent Gaussian jumps with mean $0$ and variance $\sigma^2$. In the G-BART set-up we assume that the $j$-th step height of the $t$-th tree, $\beta_{tj} \stackrel{i.i.d}{\sim} F_\beta$, where $F_\beta$ is any general distribution with the following property: for some constants $C_1, C_2, C_3$ such that $C_1>0$, $0 < C_2 \le 2$ and $C_3 > 0$,
\begin{equation}\label{eq:beta_tail1}
    F_\beta(\norm{\beta}_\infty \le t) \gtrsim \left (e^{-C_1 t^{C_2}} t \right)^{p} \quad \text{for $0< t \le 1$}
\end{equation}
and
\begin{equation}\label{eq:beta_tail2}
    F_\beta(\norm{\beta}_\infty \ge t) \lesssim e^{-C_3 t} \quad \text{for $t \ge 1$}
\end{equation}
where $\norm{\cdot}_\infty$ represents the $L_\infty$ norm and $F_\beta(\norm{\beta}_\infty \ge t)$ denotes the tail probability of the distribution on the step heights $\beta \in \mathbb{R}^p$. Both the multivariate Gaussian and the multivariate Laplace distribution come from this family of distributions and so do any sub-Gaussian distributions. A proof of these statements is provided in the appendix. We will see in Section \ref{sub: step_result} - \ref{sub: holder_result} that these conditions are {\it sufficient} to guarantee that the G-BART estimator has a near-optimal posterior concentration rate.

However we should note that the conditions \eqref{eq:beta_tail1}-\eqref{eq:beta_tail2}, although {\it sufficient}, are not {\it necessary} conditions and distributional assumptions on the step sizes that do not satisfy these conditions, might still guarantee a near-optimal posterior concentration rate. For such an example, please refer to the `classification with Dirichlet step-heights' in Section \ref{sec: implications}.

\section{Posterior Concentration}\label{sec:Posterior_concentration}
Posterior concentration statements are a prominent artifact in Bayesian nonparametrics, where the primary motivation is to examine the quality of a Bayesian procedure, by studying the learning rate of its posterior, i.e. the rate at which the posterior distribution, centralizes around the truth as the sample size $n\rightarrow\infty$. In empirical settings, posterior concentration results have often influenced the proposal and fine-tuning of priors. Oftentimes seemingly unremarkable priors give rise to capricious outcomes, especially in the infinite-dimensional parameter spaces, such as the one considered here (\cite{cox1993analysis}, \cite{diaconis1986consistency}) and designing well-behaved priors turns out to be of utmost importance, thus further reinstating the importance of posterior concentration statements.

The Bayesian approach proceeds by imposing a prior measure $\Pi(\cdot)$ on $\mF$, the set of all estimators of $f_0$. For the G-BART models this corresponds to the set of all step functions supported on an additive ensemble of Bayesian trees. Given observed data $\Y^{(n)}=(Y_1,\dots, Y_n)'$, the inference about $f_0$ is solely dependent on the posterior distribution
$$
\Pi(A\mid\Y^{(n)})=\frac{\int_A \prod_{i=1}^n \Pi_f(Y_i\mid\X_i)\d\Pi(f)}{\int \prod_{i=1}^n \Pi_f(Y_i\mid\X_i)\d\Pi(f)}\quad\forall A\in\mathcal{B}
$$
where $\mathcal{B}$ is a $\sigma$-field on $\mF$ and 
where $\Pi_f(Y_i\C\X_i)$ is the conditional likelihood function for the output $Y_i$, given the covariates $\X_i$, under parameterization $f$.

Ideally under a suitable prior, the posterior should put most of its probability mass around a small neighborhood of the true function and as the sample size increases, the diameter of this neighborhood should go to zero at a fast pace. Formally speaking, for a given sample size $n$, if we examine an $\varepsilon_n$-neighborhood of the true function $\mathcal{A}_{\varepsilon_n}$, for some $\varepsilon_n \rightarrow 0$ and $n \varepsilon_n^2 \rightarrow \infty$, we should expect
\begin{equation}\label{eq:exponential_concentration}
\Pi(\mathcal{A}_{\varepsilon_n}^c \C \Y^{(n)} ) \rightarrow 0\quad\text{in $\P_{f_0}^{(n)}$-probability as $n\rightarrow\infty$,} 
\end{equation}
where $\mathcal{A}_{\varepsilon_n}^c$ denotes the complement of the neighborhood $\mathcal{A}_{\varepsilon_n}$.

In the context of G-BART, given observed data $\Y^{(n)}=(\Y_1,\dots, \Y_n)'$, we are interested in evaluating whether the posterior concentrates around the true likelihood $\P_{f_0}^{(n)} = \prod_{i=1}^n P_{f_0}^{i}$ at a near-minimax rate, where $P_{f_0}^i = P_{f_0}(\Y_i \C \X_i)$ is of the form \eqref{eq:exponential_pdf}, for $i=1,\dots,n$. Following the suggestions of \cite{ghosal2007convergence}, we look at the smallest $H_n$-neighborhoods around $\P_{f_0}^{(n)}$ that contain the bulk of the posterior probability. Specifically, for a diameter $\varepsilon>0$ define 
\begin{equation}\label{eq:A_neighborhood}
\mathcal{A}_{\varepsilon}=\{f \in \mF: H_n(P_f,P_{f_0}) \leq  \varepsilon \}
\end{equation}

Theorem 4 of \cite{ghosal2007convergence} demonstrates that the statement \eqref{eq:exponential_concentration} can be proved by verifying three sufficient conditions. The first condition, henceforth referred to as the ``entropy condition'' specifies that
\begin{equation}\label{eq:entropy1}
\displaystyle \sup_{\varepsilon > \varepsilon_n} \log  N\left(\tfrac{\varepsilon}{36}; \mathbb{F}_n \cap \mathcal{A}_{\varepsilon}; H_n\right) \lesssim n\,\varepsilon_n^2, \tag{C1}
\end{equation}
where $N(\varepsilon; \Omega; d)$ denotes the $\varepsilon$-covering number of a set $\Omega$ for a semimetric $d$,  i.e. the minimal number of $d$-balls of radius $\varepsilon$ needed to cover the set $\Omega$ and $\{\mathbb{F}_n\}_{n \ge 1}$ denotes an increasing sequence of approximating sieves. The sequence of sieves used in this paper is described in the appendix.

The second condition requires that the prior puts enough mass around the true likelihood $\P_{f_0}^{(n)}$, meaning that for a given sample size $n \in \mathbb{N}\setminus \{0\}$ and for some $d>2$,
\begin{equation}\label{eq:prior1}
\displaystyle {\Pi(f \in \mathcal{F}: K_n(f_0,f) \vee V_n(f_0,f) \leq \varepsilon_n^2 )} \gtrsim e^{-d\,n\,\varepsilon_n^2}, \tag{C2}
\end{equation}
where $K_n(f_0,f) = \frac{1}{n}\sum_{i=1}^n K(P_{f_0}(\cdot\mid\X_i), P_f(\cdot\mid\X_i))$ and $V_n(f_0,f) = \frac{1}{n}\sum_{i=1}^n V(P_{f_0}(\cdot\mid\X_i), P_f(\cdot\mid\X_i))$ are the averaged Kullback-Leibler divergence and second-order KL variation from the truth.

The final condition, referred to as the ``prior decay rate condition'' stipulates that the sequence of sieves $\mathbb{F}_n \uparrow \mF$ captures the entire parameter space with increasing accuracy, in the sense that the complementary space $\mathcal{F} \backslash \mathbb{F}_n$ has negligible prior probability mass for large values of $n$.
\begin{equation}\label{eq:remain1}
\displaystyle \Pi(\mathcal{F} \backslash \mathbb{F}_n) = o(e^{-(d+2)\,n\,\varepsilon_n^2})  \tag{C3}
\end{equation}
The results of type \eqref{eq:exponential_concentration} quantify not only the typical distance between a point estimator (posterior mean/median) and the truth, but also the typical spread of the posterior around the truth and hence are stronger than `posterior consistency' statements. These results are usually the first step towards further uncertainty quantification statements, e.g., Bernstein-von Mises theorem \citep{castillo2014bernstein}.

\section{Main Results}\label{sec:results}
In this section we describe our main theoretical findings, which describe the posterior concentration rates of the generalized Bayesian trees and their additive ensembles (G-BART), when the true function $f_0$ connecting the response $\Y$ with the covariates $\X$, is either (a) a step function (Theorem \ref{theorem: step}), or (b) a monotone function (Theorem \ref{theorem: monotone}), or (c) a $\nu$-H\"{o}lder continuous function with $0 < \nu \le 1$ (Theorem \ref{theorem: smooth}). We make two important assumptions: the first assumption (subsequently referred to as Assumption 1), given below restricts the distribution of the response variable $\{\Y_1,\dots,\Y_n\} \in \mathbb{R}^{p}$ to a specific class of exponential family distributions while the second assumption (subsequently referred to as Assumption 2) concerns the spread of the covariates $\{\X_1, \dots, \X_n\} \in \mathbb{R}^{q}$.

\paragraph{Assumption 1:}
Let $\Y_1, \dots, \Y_n  \sim P_{f}$, where $P_{f}$ denotes a probability density function of the form \eqref{eq:exponential_pdf}, such that, $\eta(z) = z$ and there exist strictly increasing positive sequences $\{C^n_{g}\}_{n \ge 1}$ and $\{C^n_\beta\}_{n \ge 1}$, such that
\begin{equation}\label{eq:g_condition}
    \abs{\frac{\nabla g (\b)}{g(\b)}} \le C^n_{g} \textbf{1}_p, \quad \forall \b \in B_n= \left \{\b: \norm{\b}_\infty \le C^n_\beta \right\},
\end{equation}
where $\textbf{1}_p = (1,\dots,1) \in \mathbb{R}^p$ denotes a $p$-dimensional vector of ones and $\nabla g$ denotes the vector of partial derivatives. We assume $\{C^n_{g}\} \vee \{C^n_\beta\} \lesssim n^M$ for some $M > 0$. 

Intuitively, Assumption 1 ensures that the likelihood is not overly sensitive to perturbations in $f_0$. If $g(\cdot)$ were to change rapidly, small differences between two candidate functions could produce very different likelihood values, making it difficult to control the KL divergence between them. By bounding $|\nabla g(\beta)/g(\beta)|$, Assumption 1 guarantees that closeness in function space implies closeness in likelihood space, a property that is essential for verifying condition \eqref{eq:prior1}.

The above assumption is satisfied by most distributions commonly used in the regression and classification settings, as will be demonstrated in Section \ref{sec: implications}.

\paragraph{Assumption 2:} For a k-d tree partition, $\wh{\mathcal{T}} = \{\wh{\Omega_k}\}$, with $K=2^{p s}$-many leaves\footnote{Here $s$ is a non-negative integer indexing the resolution of the k-d tree partition, determining the number of splits applied to each coordinate direction. It depends on $n$: $s$ is chosen so that $K$ grows with $n$ at a rate 
that balances approximation error and prior concentration, as described in 
Supplementary Material \ref{sec:result_proof}.},  the dataset $\{\X_1, \dots, \X_n\}$ satisfies the following condition: for any nonnegative integer $s$, there exists some large enough constant $M > 0$ such that
\begin{equation}\label{eq:regular_dataset}
\max_{1 \le k \le K} \text{diam}(\wh{\Omega_k}) < M \sum_{k=1}^K \mu(\Omega_k) \text{diam}(\wh{\Omega_k}),
\end{equation}
where $\mu(\Omega_k) = \frac{1}{n} \sum_{i=1}^n \mathbb{I}\{\X_i \in \Omega_k\}$ denotes the proportion of observations in the cell $\Omega_k$ and $\text{diam}(\wh{\Omega_k}) = \max_{\x,\y \in \Omega_k} \norm{\x - \y}_2$ denotes the spread of the cell $\Omega_k$ with respect to the $L_2$-norm.

\subsection{Results on Step-Functions}\label{sub: step_result}
Let us suppose $f_0$ is a step function supported on an axes-paralleled partition $\{\Omega_k\}_{k=1}^{K_0}$. For any such step function $f_0$, we define the {\it complexity of $f_0$}, as the smallest $K$ such that there exists a k-d tree partition $\{\Omega_k\}_{k=1}^{K}$ with $K$ cells, for which the step function $f(x) = \sum_{k=1}^K \beta_k \mathbb{I}\{x \in \Omega_k\}$ can approximate $f_0$ without any error, for some step heights $(\beta_1, \dots, \beta_K) \in \mathbb{R}^K$. This complexity number, denoted by $K_{f_0}$, depends on the true number of step $K_0$, the diameter of the intervals $\{\Omega_k\}_{k=1}^{K_0}$, and the number of covariates $q$. The actual minimax rate for approximating such piecewise-constant functions $f_0$ with $K_0 > 2$ pieces, is $n^{-1/2}\sqrt{K_0 \log {(n/K_0)}}$ \citep{gao2017minimax}. The following theorem shows that the posterior concentration rate of G-BART is almost equal to the minimax rate, except that $K_0$ gets replaced by $K_{f_0}$. The discrepancy is an unavoidable consequence of the fact that the true number of steps $K_0$ is unknown. Had this information been available, the G-BART estimator would have attained the exact minimax rate.

\begin{theorem}\label{theorem: step}
If we assume that the distribution of the step-sizes satisfies \eqref{eq:beta_tail1} and \eqref{eq:beta_tail2}, then under Assumptions 1 and 2 with $q \lesssim \sqrt{\log n}$, the generalized BART estimator satisfies the following property:

If $f_0$ is a step-function, supported on an axes-paralleled partition, with complexity $K_{f_0} \lesssim \sqrt{n}$ and $\norm{f_0}_\infty \lesssim \sqrt{\log n}$, then with $\varepsilon_n=n^{-1/2}\sqrt{K_{f_0} \log^{2\gamma}{\left(n/K_{f_0}\right)}}$ and $\gamma > 1/2$,
\begin{equation*}
\Pi\left( f \in \mathcal{F}: H_n(\P_f,\P_{f_0}) > M_n \varepsilon_n \mid \Y^{(n)}\right) \to 0,
\end{equation*}
for any $M_n \rightarrow \infty$ in $\P_{f_0}^{(n)}$-probability, as $n,q \to \infty$. 
\end{theorem}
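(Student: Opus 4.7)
The natural plan is to invoke Theorem~4 of \cite{ghosal2007convergence} by verifying its three sufficient conditions \eqref{eq:entropy1}--\eqref{eq:remain1} at the rate $\varepsilon_n = n^{-1/2}\sqrt{K_{f_0}\log^{2\gamma}(n/K_{f_0})}$. I would build an approximating sieve $\mathbb{F}_n$ whose elements are forests with at most $K_n^\star \asymp K_{f_0}\log^{2\gamma-1}(n/K_{f_0})$ total leaves and step heights bounded in $L_\infty$ by some slowly growing $C_n^\star \asymp \log n$. A preliminary lemma, obtained by a second-order Taylor expansion of the log-density \eqref{eq:exponential_pdf} together with the gradient control \eqref{eq:g_condition} from Assumption~1, would show that on $\mathbb{F}_n$ the quantities $H_n$, $K_n$ and $V_n$ are all comparable to $\|f-f_0\|_{2,n}$ (or its square), up to factors polynomial in $n$. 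This reduces everything to metric-entropy and prior-mass calculations for step functions.

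\textbf{Entropy.}
Step functions supported on an ensemble of $T$ axes-parallel trees with at most $K_n^\star$ total leaves are indexed by a discrete tree architecture plus a continuous step-height vector in $\mathbb{R}^{p K_n^\star}$. Each split in the Chipman prior picks one of $q$ variables and one of $n$ candidate split values, so the number of admissible architectures is at most $(qn)^{K_n^\star}$; for each architecture, the $\varepsilon$-covering of the step-height cube has $\lesssim (C_n^\star/\varepsilon)^{pK_n^\star}$ elements. Summing these contributions yields $\log N(\varepsilon/36; \mathbb{F}_n \cap \mathcal{A}_\varepsilon; H_n) \lesssim K_n^\star \log n \lesssim n\varepsilon_n^2$, which is \eqref{eq:entropy1}.

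\textbf{Prior mass (the main obstacle).}
The crux of the argument is constructing an ensemble $(\mE^\star, \B^\star)$ that reproduces $f_0$ at every observed $\X_i$ and then lower bounding the prior probability of a small neighborhood around it. Because $f_0$ is piecewise constant on an axes-parallel partition of complexity $K_{f_0}$, Assumption~2 together with a careful sequence of splits produces a single k-d tree with $K_{f_0}$ leaves whose cells coincide with those of $f_0$ on the observed data, and the remaining $T-1$ trees can be taken to be trivial (one leaf, near-zero height). The Chipman prior assigns to this particular architecture a probability that factors as a product of $\alpha^{-d(\Omega)}$-type splitting probabilities together with $(qn)^{-1}$ terms from the uniform choice of split variable and split point, giving a log-prior contribution of order $-K_{f_0}\log n$ (using $q \lesssim \sqrt{\log n}$). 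Multiplying by the step-height small-ball lower bound \eqref{eq:beta_tail1}, which contributes a further factor of order $\exp(-CK_{f_0}\log(1/\varepsilon_n))$, the overall neighborhood probability is of order $\exp(-C'K_{f_0}\log n)$. The hypothesis $\gamma > 1/2$ provides exactly the extra logarithmic slack needed so that this dominates $e^{-dn\varepsilon_n^2} = e^{-dK_{f_0}\log^{2\gamma}(n/K_{f_0})}$.

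\textbf{Prior decay.}
The complement $\mF \setminus \mathbb{F}_n$ splits into forests with more than $K_n^\star$ total leaves and forests with some step height of $L_\infty$ norm exceeding $C_n^\star$. The first event has prior probability that decays geometrically in $K_n^\star$ since the rule \eqref{eq:p_split} makes the probability of any branch reaching depth $d$ shrink like a geometric series. The second is controlled by the tail bound \eqref{eq:beta_tail2}, contributing a factor of order $K_n^\star e^{-C_3 C_n^\star}$ per tree. With the choices $K_n^\star \asymp K_{f_0}\log^{2\gamma-1}(n/K_{f_0})$ and $C_n^\star \asymp \log n$, both contributions decay faster than $e^{-(d+2)n\varepsilon_n^2}$, yielding \eqref{eq:remain1} and completing the outline.
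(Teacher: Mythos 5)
Your roadmap coincides with the paper's: verify conditions \eqref{eq:entropy1}--\eqref{eq:remain1} of Theorem 4 of \cite{ghosal2007convergence} at $\varepsilon_n$, use Assumption~1 to dominate $H_n$, $K_n$, $V_n$ by a step-height discrepancy times $C_g^n$ (the paper does this with a first-order expansion, giving $K_n\vee V_n\vee H_n\lesssim C_g^n\sum_k\|\beta_k-\beta_k^0\|_1$), cover the step-height cube for the entropy with $k_n\asymp n\varepsilon_n^2/\log(n/K_{f_0})$ leaves, and for \eqref{eq:prior1} lower-bound the prior mass of one tree that reproduces $f_0$ exactly with $K_{f_0}$ leaves together with the small-ball bound \eqref{eq:beta_tail1}; your remark that $\gamma>1/2$ supplies the slack between the $K_{f_0}\log n$ prior cost and $n\varepsilon_n^2=K_{f_0}\log^{2\gamma}(n/K_{f_0})$ is exactly how the paper closes that condition.

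The genuine gap is in your treatment of the prior-decay condition \eqref{eq:remain1}. First, with your sieve cutoff $C_n^\star\asymp\log n$, the tail bound \eqref{eq:beta_tail2} only yields a contribution of order $k_n\,e^{-C_3\log n}=k_n\,n^{-C_3}$, i.e.\ polynomial decay, whereas \eqref{eq:remain1} requires $o\left(e^{-(d+2)n\varepsilon_n^2}\right)$ with $n\varepsilon_n^2=K_{f_0}\log^{2\gamma}(n/K_{f_0})\gtrsim\log^{2\gamma}n\gg\log n$ (recall $2\gamma>1$ and $K_{f_0}\lesssim\sqrt n$, so $\log(n/K_{f_0})\gtrsim\log n$); hence your claim that this term ``decays faster than $e^{-(d+2)n\varepsilon_n^2}$'' fails as stated. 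The paper truncates instead at the sequence $C_\beta^n$ of Assumption~1, which grows polynomially in $n$ and dominates $n\varepsilon_n^2$, and this costs nothing in the entropy since only $\log C_\beta^n\lesssim\log n$ enters there. Second, your argument that the event $\{K>k_n\}$ has ``geometrically'' decaying prior probability is quantitatively insufficient: $e^{-cK_n^\star}$ with $K_n^\star\asymp n\varepsilon_n^2/\log(n/K_{f_0})$ is far larger than $e^{-(d+2)n\varepsilon_n^2}$. What is needed is a tail of the form $\Pi(K>k_n)\lesssim e^{-c\,k_n\log n}$, which the paper imports from Corollary 5.2 of \cite{rockova2019theory} for the prior \eqref{eq:p_split} (and from Section 8.3 of \cite{rockova2020posterior} for the Denison prior); this sharper rate relies on the depth-dependent decay of the split probability, not merely on geometric branching. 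Both defects are repairable (enlarge $C_n^\star$ to at least order $n\varepsilon_n^2$, and invoke the correct tree-size tail), but as written your verification of \eqref{eq:remain1} does not go through.
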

\vspace{-10pt}
\begin{proof}
Proof is given in the appendix.
\end{proof}

\subsection{Results on Monotone Functions}\label{sub: monotone_result}
An important implication of Theorem \ref{theorem: step} is that posterior concentration results on step functions can potentially build the foundation for similar results on broader class of functions, aided by the ``simple function approximation theorem'' \citep{stein2009real}, which states that for any measurable function $f$ on $\mathcal{E} \subseteq \mathbb{R}^q$, there exists a sequence of step functions $\{f_k\}$ which converges point-wise to $f$ almost everywhere \citep{stein2009real}. As a corollary to this theorem, we can derive the following result on the set of all coordinatewise monotone functions. A function $f_0: \mathbb{R}^q \rightarrow \mathbb{R}$ is defined as coordinatewise monotone if $f_0(\textbf{x})$ is monotone (increasing or decreasing) in $x_j$ for every $j \in \{1, \dots, q\}$, with all other coordinates $\{x_1, \dots, x_q\} \setminus \{x_j\}$ held fixed.

\begin{lemma}\label{lemma:monotone_approx}
Any coordinatewise {\bf monotone} bounded function $f_0$ can be approximated with arbitrary precision $\varepsilon$, by a step function supported on a k-d tree partition with number of leaves $K_{f_0}(\varepsilon) \ge \lceil 1/\varepsilon \rceil$. We define $K_{f_0}(\varepsilon)$ to be the {\it complexity of the monotone function $f_0$} with respect to $\varepsilon > 0$.
\end{lemma}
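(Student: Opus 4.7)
The plan is to combine the simple function approximation theorem (invoked just before the lemma) with a level-set construction that exploits the coordinate-wise monotone structure.

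First, I would reduce to the case of non-decreasing $f_0$ (otherwise work with $-f_0$) and use boundedness to set $M = \|f_0\|_\infty < \infty$. For a fixed $\varepsilon > 0$, I would slice the range $[-M,M]$ into $N \le \lceil 2M/\varepsilon \rceil$ consecutive subintervals $I_1,\dots,I_N$ of length at most $\varepsilon$, and consider the level sets $L_j := f_0^{-1}(I_j)$, which partition $[0,1]^q$. The key structural observation is that, by coordinate-wise monotonicity, every sublevel set $\bigcup_{k\le j} L_k = f_0^{-1}\bigl((-\infty,\sup I_j]\bigr)$ is a lower set of the product order on $[0,1]^q$, so the $L_j$'s are \textbf{monotone slabs}, not arbitrary measurable sets.

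Second, I would construct a k-d tree partition $\{\widehat{\Omega}_k\}$ whose cells are adapted to these slabs. The construction is a greedy recursive split: starting from the root cell $[0,1]^q$, whenever a cell $C$ has \emph{monotone oscillation} $\mathrm{osc}(C):=f_0(\mathrm{top\ corner}\,C)-f_0(\mathrm{bottom\ corner}\,C) > \varepsilon$, split $C$ at the midpoint along a coordinate direction that maximally separates the corner values. Because $f_0$ is monotone, $\mathrm{osc}(C) = \sup_{x\in C} f_0(x) - \inf_{x\in C} f_0(x)$, so every leaf $\widehat{\Omega}_k$ at the end of the recursion has $f_0$-oscillation at most $\varepsilon$. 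Defining the step heights $\beta_k := f_0(c_k)$ at the cell centers yields a step function $\widehat f(x) = \sum_k \beta_k \mathbb{I}\{x\in\widehat{\Omega}_k\}$ with $\|\widehat f - f_0\|_\infty \le \varepsilon$, which is the approximation statement.

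Third, I would bound the number of leaves $K_{f_0}(\varepsilon)$. The crucial monotonicity lemma is the \emph{contractive splitting inequality}: for any axes-parallel split of a cell $C$ into children $C_1,C_2$,
\begin{equation*}
\mathrm{osc}(C_1)+\mathrm{osc}(C_2) \le \mathrm{osc}(C),
\end{equation*}
which follows by comparing corner values and using monotonicity to discard the cross terms. Iterating, $\sum_k \mathrm{osc}(\widehat{\Omega}_k) \le \mathrm{osc}([0,1]^q) \le 2M$, so the number of leaves whose oscillation exceeds $\varepsilon/2$ is at most $4M/\varepsilon$, giving a finite upper bound on $K_{f_0}(\varepsilon)$ that scales like $1/\varepsilon$. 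For the lower bound $K_{f_0}(\varepsilon)\ge \lceil 1/\varepsilon\rceil$, I would argue information-theoretically: a non-constant monotone function whose range contains an interval of unit length cannot be $\varepsilon$-approximated with fewer than $\lceil 1/\varepsilon\rceil$ distinct step heights, hence not with fewer than $\lceil 1/\varepsilon\rceil$ cells.

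The main obstacle I anticipate is making the recursive-splitting argument yield a \emph{k-d tree} (i.e.\ a binary tree built by recursive axes-parallel splits) rather than a generic axes-parallel partition, while keeping the per-cell oscillation bound and the telescoping sum both valid. The contractive splitting inequality is tailor-made for this, since it depends only on the two corner values of the parent cell and not on any global properties of $f_0$; provided the recursion always splits a single cell along a single axis, both properties survive and the construction produces a genuine k-d tree.
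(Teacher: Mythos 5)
Your first step (slicing the range into level sets and observing that, by coordinate-wise monotonicity, the sublevel sets are lower sets of the product order) is essentially the paper's own construction, and is in fact stated more carefully than in the paper, which asserts each level set is a product of intervals. The genuine gap is in your third step: the \emph{contractive splitting inequality} $\mathrm{osc}(C_1)+\mathrm{osc}(C_2)\le\mathrm{osc}(C)$ is false whenever $q\ge 2$. Take $f_0(x_1,x_2)=x_1+x_2$ on $[0,1]^2$ and split at $x_1=1/2$: the parent has oscillation $2$, while each child has oscillation $3/2$, so the sum increases to $3$. The inequality holds (with equality) only in one dimension, where the top corner of the left child coincides with the bottom corner of the right child; for $q\ge 2$ the two inner corners are ordered the wrong way, and monotonicity gives $f_0(\text{top of }C_1)\ge f_0(\text{bottom of }C_2)$, i.e.\ the cross terms you want to discard have the wrong sign. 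Hence the telescoping bound $\sum_k\mathrm{osc}(\widehat\Omega_k)\le 2M$ fails (and even if it held, it would only control leaves with oscillation above $\varepsilon/2$, not all leaves). Moreover the conclusion it was meant to deliver cannot be true: for $f_0(x)=(x_1+\cdots+x_q)/q$, any box whose oscillation is at most $2\varepsilon$ has side lengths summing to at most $2q\varepsilon$, hence volume at most $(2\varepsilon)^q$ by AM--GM, so a sup-norm $\varepsilon$-approximation on a k-d tree (box) partition needs on the order of $\varepsilon^{-q}$ leaves; no bound of order $M/\varepsilon$ independent of $q$ is possible. Note the lemma only asserts the \emph{lower} bound $K_{f_0}(\varepsilon)\ge\lceil 1/\varepsilon\rceil$, and the paper explicitly remarks afterwards that $K_{f_0}(\varepsilon)$ also depends on $q$ and on $\|f_0\|_\infty$.

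The paper sidesteps all of this: having formed the $K=\lceil 1/\varepsilon\rceil$ level sets, it simply invokes the fact that a step function supported on an axis-parallel partition can be re-expressed as a step function supported on a (generally finer) recursive binary tree partition with $\widehat K\ge K$ leaves; no dimension-free upper bound on the number of leaves is claimed or needed. A secondary problem with your greedy midpoint-splitting recursion: for a merely monotone, possibly discontinuous $f_0$ (say with a jump of size larger than $\varepsilon$ across the hyperplane $x_1=1/3$), every dyadic cell straddling that hyperplane has oscillation exceeding $\varepsilon$, so the recursion never terminates; the paper's splits are placed at level-set boundaries (and, in the application to Theorem 4.2, only approximation in the empirical norm at the design points is required), which avoids this. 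Your information-theoretic lower-bound argument is fine and matches the paper's choice of $K$.
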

The complexity $K_{f_0}(\varepsilon)$ also depends on the dimension of the domain $q$ as well as on the magnitude of the true function $\norm{f_0}_\infty$. This paves the way for deriving the posterior concentration rate of G-BART when the true function $f_0(\cdot)$ connecting the covariates $\X$ with a univariate response $\Y$ is a monotone function. The minimax rate of estimation for such densities is $n^{-1/(2+q)}$ \citep{biau2003risk}. The following theorem states that the posterior concentration rate of G-BART equals to this optimum rate up to a logarithmic function, provided that the magnitude of the true function $f_0$ is not ``too large''.

\begin{theorem}\label{theorem: monotone}
If we assume that the distribution of the step-sizes satisfies \eqref{eq:beta_tail1} and \eqref{eq:beta_tail2}, then under Assumptions 1 and 2 with $q \lesssim \sqrt{\log n}$, the generalized BART estimator satisfies the following property:

If the true function $f_0: \mathbb{R}^q \rightarrow \mathbb{R}$ is coordinatewise monotone, with $\norm{f_0}_\infty \lesssim \sqrt{\log n} $, then with $\varepsilon_n = n^{-1/(2+q)} \sqrt{\log n}$,
\begin{equation*}
\Pi\left( f \in \mathcal{F}: H_n(\P_f,\P_{f_0}) > M_n \varepsilon_n \mid \Y^{(n)}\right) \to 0,
\end{equation*}
for any $M_n \rightarrow \infty$ in $\P_{f_0}^{(n)}$-probability, as $n,q \to \infty$. 
\end{theorem}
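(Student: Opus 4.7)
The plan is to reduce Theorem \ref{theorem: monotone} to Theorem \ref{theorem: step} via Lemma \ref{lemma:monotone_approx}. For a precision $\varepsilon$ to be chosen, I would pick a step function $\tilde{f}_0$ supported on a k-d tree with $K = K_{f_0}(\varepsilon)$ leaves satisfying $\norm{\tilde{f}_0 - f_0}_\infty \le \varepsilon$; a direct counting argument for a bounded monotone function in $q$ coordinates yields $K \lesssim \varepsilon^{-q}$, up to polylogarithmic factors involving $\norm{f_0}_\infty \lesssim \sqrt{\log n}$. Theorem \ref{theorem: step} then places the G-BART posterior inside a Hellinger ball around $\P_{\tilde{f}_0}$ of radius $\tilde{\varepsilon}_n \asymp n^{-1/2}\sqrt{K \log^{2\gamma}(n/K)}$, provided $K \lesssim \sqrt{n}$.

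Next I would balance the approximation bias $\varepsilon \asymp K^{-1/q}$ against $\tilde{\varepsilon}_n \asymp \sqrt{K/n}$, which gives $K \asymp n^{q/(2+q)}$ and $\varepsilon \asymp n^{-1/(2+q)}$, matching the target $\varepsilon_n = n^{-1/(2+q)}\sqrt{\log n}$ after absorbing the $\log^{2\gamma}(n/K)$ factor. The growth restriction $q \lesssim \sqrt{\log n}$ is what keeps $K$ below the $\sqrt{n}$ threshold required by Theorem \ref{theorem: step}, up to sub-polynomial slack. The two pieces are then stitched together using the triangle inequality for the averaged Hellinger semimetric,
\[
H_n(\P_f, \P_{f_0}) \le H_n(\P_f, \P_{\tilde{f}_0}) + H_n(\P_{\tilde{f}_0}, \P_{f_0}),
\]
where the first summand is handled by Theorem \ref{theorem: step} (since $\tilde{f}_0$ is itself a step function of complexity $K$) and the second by a Lipschitz-type estimate on the exponential-family densities in the natural parameter.

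The main obstacle I expect is this last Lipschitz step: converting $L^\infty$ proximity of $f_0$ and $\tilde{f}_0$ into uniform-in-$\X_i$ Hellinger proximity of the densities $\P_f$. Assumption 1 is the right tool, since the bound on $\abs{\nabla g / g}$ over $B_n$ yields an estimate of the form $H_n(\P_{\tilde{f}_0}, \P_{f_0}) \lesssim C^n_g \, \norm{\tilde{f}_0 - f_0}_\infty$; but one must verify that (i) $\tilde{f}_0$ also satisfies $\norm{\tilde{f}_0}_\infty \lesssim \sqrt{\log n}$, so that the argument of $g$ stays inside $B_n$, and (ii) the polynomial-in-$n$ constant $C^n_g$ only contributes a logarithmic overhead that can be absorbed into $\varepsilon_n$. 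A secondary technical point is pinning down the sharp complexity $K_{f_0}(\varepsilon) \asymp \varepsilon^{-q}$ for monotone surfaces in $q$ dimensions, with clean dependence on $\norm{f_0}_\infty$, so that the trade-off truly delivers the advertised minimax-up-to-log rate.
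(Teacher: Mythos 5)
Your high-level plan (approximate the monotone $f_0$ by a step function with $K \asymp n^{q/(2+q)}$ leaves, balance bias against $\sqrt{K/n}$) matches the paper's strategy, but the way you stitch the pieces together has a genuine gap: you cannot invoke Theorem \ref{theorem: step} as a black box with $\tilde f_0$ playing the role of the truth. Theorem \ref{theorem: step} is a statement about the posterior when the data are generated from $\P_{\tilde f_0}^{(n)}$ (its convergence is in $\P_{\tilde f_0}^{(n)}$-probability), whereas here the data come from the monotone $f_0$, so the posterior is being evaluated under a different, misspecified sampling distribution. One cannot transfer the in-probability statement from $\P_{\tilde f_0}^{(n)}$ to $\P_{f_0}^{(n)}$ merely because $H_n(\P_{f_0},\P_{\tilde f_0})\lesssim \varepsilon_n$: since $n\varepsilon_n^2\to\infty$, the two $n$-fold product measures are not contiguous (their affinity decays like $e^{-cn\varepsilon_n^2}$), so the triangle inequality controls distances between densities but not the change of measure needed for the posterior statement. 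The paper avoids this entirely: the approximating step function $\wh f_0$ from Lemma \ref{lemma:monotone_approx} (with $\norm{f_0-\wh f_0}_{2,n}<\varepsilon_n/2$) is used only inside the prior-concentration condition \eqref{eq:prior1}, where closeness of $f$ to $\wh f_0$ plus closeness of $\wh f_0$ to $f_0$ (via the Kullback--Leibler/Hellinger Lipschitz bound of the appendix and Assumption 1) lower-bounds the prior mass of KL-neighborhoods of the \emph{true} $f_0$; the entropy and sieve conditions \eqref{eq:entropy1} and \eqref{eq:remain1} do not involve the truth and are re-verified exactly as in the proof of Theorem \ref{theorem: smooth}. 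Your ``main obstacle'' (the Lipschitz conversion of $L^\infty$ proximity into divergence proximity, with $\wh f_0$ staying in $B_n$ and $C_g^n$ absorbed into logs) is indeed needed, but it belongs inside condition \eqref{eq:prior1} of a direct Ghosal--van der Vaart verification, not after a black-box application of the step-function theorem.

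A second concrete failure of the reduction: the approximating complexity you need is $K\asymp n^{q/(2+q)}$, which exceeds the hypothesis $K_{f_0}\lesssim\sqrt n$ of Theorem \ref{theorem: step} as soon as $q>2$ (and $q$ is allowed to grow like $\sqrt{\log n}$ here), so even setting aside the misspecification issue, the hypotheses of the theorem you want to cite are violated in the regime that matters. The direct verification carried out in the paper has no such $\sqrt n$ restriction; it only needs $a_n\log(\cdot)\lesssim n\varepsilon_n^2$, which holds with $a_n\asymp n^{q/(2+q)}$ and $\varepsilon_n=n^{-1/(2+q)}\sqrt{\log n}$. So the correct repair of your argument is essentially to abandon the black-box reduction and retrace conditions \eqref{eq:entropy1}--\eqref{eq:remain1} with $a_n$ equal to the complexity of the monotone approximation, which is precisely what the paper's proof does.
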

\begin{proof}
The first step of the proof involves finding an approximating step-function $\wh{f_0}$ by Lemma \ref{lemma:monotone_approx}, such that $\norm{f_0-\wh{f}_0}_{2,n} < \varepsilon_n/2$. The rest of the proof follows by retracing the steps as in the proof of Theorem \ref{theorem: smooth} given in the supplementary material.
\end{proof}

The above result demonstrates that the Generalized BART model adapts to monotonic patterns in the true function $f_0$, without any additional prior assumptions.

\subsection{Results on H\"{o}lder Continuous Functions}\label{sub: holder_result}
This section describes the posterior concentration results on G-BART when the true function $f_0$ connecting $\X$ with $\Y$ is a $\nu$-H\"{o}lder continuous function with $0< \nu \le 1$. \cite{rockova2020posterior} and \cite{rockova2019theory} proved that the posterior concentration rates of the BART model (under the priors of \cite{denison1998bayesian} and \cite{chipman2010bart} respectively) are equal to $n^{-\alpha/(2\alpha+q)}$, the minimax rate of estimation for such functions \citep{stone1982optimal}, except for a logarithmic factor. These results can be derived as direct corollaries of the following theorem for G-BART, when $\Y$ is a univariate continuous response and the step-sizes are assumed to follow a Gaussian distribution.

\begin{theorem}\label{theorem: smooth}
If we assume that the distribution of the step-sizes satisfies \eqref{eq:beta_tail1} and \eqref{eq:beta_tail2}, then under Assumptions 1 and 2 with $q \lesssim \sqrt{\log n}$, the generalized BART estimator satisfies the following property: 
 
If $f_0$ is a $\nu$-H\"{o}lder continuous function with  {$0<\nu\leq 1$}, where $\|f_0\|_\infty\lesssim \sqrt{\log n}$, then with $\varepsilon_n=n^{-\alpha/(2\alpha+q)}\sqrt{\log n}$,
\begin{equation*}
\Pi\left( f \in \mathcal{F}: H_n(\P_f,\P_{f_0}) > M_n \varepsilon_n \mid \Y^{(n)}\right) \to 0,
\end{equation*}
for any $M_n \rightarrow \infty$ in $\P_{f_0}^{(n)}$-probability, as $n,q \to \infty$. 
\end{theorem}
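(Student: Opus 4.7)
The plan is to verify the three sufficient conditions \eqref{eq:entropy1}, \eqref{eq:prior1}, and \eqref{eq:remain1} of the Ghosal--van der Vaart framework; by Theorem~4 of \cite{ghosal2007convergence} this will imply the stated posterior contraction. The approach parallels the argument for Theorem~\ref{theorem: step} but replaces the ``true'' partition by a deterministic k-d tree approximant of $f_0$ built from its H\"older regularity. Concretely, I would first produce a step function $\wh{f}_0$ supported on a k-d tree partition $\wh{\mathcal{T}}$ with $K^\star = \lceil n^{q/(2\nu+q)} \rceil$ leaves. Under Assumption~2 the diameter of the largest cell of $\wh{\mathcal{T}}$ is $\lesssim (K^\star)^{-1/q}$, and the $\nu$-H\"older property of $f_0$ then yields $\norm{f_0 - \wh{f}_0}_\infty \lesssim (K^\star)^{-\nu/q} \lesssim n^{-\nu/(2\nu+q)}$; picking the step heights as cell averages of $f_0$ also ensures $\norm{\wh{f}_0}_\infty \le \norm{f_0}_\infty \lesssim \sqrt{\log n}$, so $\wh{f}_0$ sits well inside the sieve constructed below.

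For the prior-mass condition~\eqref{eq:prior1}, Assumption~1 and the exponential-family form \eqref{eq:exponential_pdf} with $\eta(z)=z$ allow me to dominate the averaged KL divergence and its variation by an $L^2$-type parameter distance: a second-order Taylor expansion of $\log P_f/P_{f_0}$ gives $K_n(f,f_0) \vee V_n(f,f_0) \lesssim (C^n_g)^2 \norm{f - f_0}_{2,n}^2$ on the set $B_n$ of Assumption~1. Combined with the deterministic approximation above, it suffices to show the G-BART prior places mass $\gtrsim e^{-d\,n\,\varepsilon_n^2}$ on ensembles whose additive mapping lies within $\varepsilon_n/C^n_g$ of $\wh{f}_0$ in $\norm{\cdot}_{2,n}$. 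I would allocate $\wh{f}_0$ to a single tree of the ensemble, set the step heights of the remaining $T-1$ trees to values near zero, and then (i) lower-bound the prior probability of drawing a partition close to $\wh{\mathcal{T}}$ using the splitting prior~\eqref{eq:p_split}, and (ii) apply the coordinate-wise lower-tail bound~\eqref{eq:beta_tail1} at radius of order $\varepsilon_n/(C^n_g \sqrt{K^\star})$ to each of the $T K^\star$ step heights.

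For the entropy~\eqref{eq:entropy1} and remainder~\eqref{eq:remain1} conditions, I would use sieves $\mathbb{F}_n$ of additive ensembles whose total leaf count is bounded by $K_n \asymp n \varepsilon_n^2/\log n$ and whose step heights are bounded in $L_\infty$ by the polynomial $C^n_\beta$ of Assumption~1; note $K_n \asymp K^\star$, consistent with the approximating tree. A Catalan-type count controls the number of k-d tree topologies with $K_n$ leaves, while for fixed topology the step-height box is covered at scale $\varepsilon_n$ by at most $(C^n_\beta/\varepsilon_n)^{K_n}$ balls; the product yields log-entropy of order $n\varepsilon_n^2$. The complement $\mathcal{F} \setminus \mathbb{F}_n$ is handled by splitting into ``too many leaves'' (controlled by the geometrically decaying split probability $\alpha^{-d(\Omega)}$ with $\alpha<1/2$) and ``a step height exceeds $C^n_\beta$'' (controlled by a union bound combined with the upper tail~\eqref{eq:beta_tail2}).

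The main obstacle is the prior-mass bound. Because the ensemble parametrization $(\mathcal{E},\bm{B})$ is highly redundant, one cannot identify $\wh{f}_0$ with any specific draw; the mass has to be deposited via a careful allocation across the $T$ trees while simultaneously tracking the polynomial factors $C^n_g$ and $\sqrt{K^\star}$ that shrink the required radius in~\eqref{eq:beta_tail1}. Verifying that under $q \lesssim \sqrt{\log n}$ and the polynomial growth permitted by Assumption~1 these factors only inflate the required radius by $\log n$-factors, so that the exponent in~\eqref{eq:prior1} remains of order $n\varepsilon_n^2$, is the delicate bookkeeping step that ultimately produces the extra $\sqrt{\log n}$ in the stated rate.
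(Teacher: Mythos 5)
Your proposal follows essentially the same route as the paper's proof: verifying (C1)--(C3) of the Ghosal--van der Vaart framework with a sieve of tree-step functions having $\asymp n\varepsilon_n^2/\log n$ leaves and heights bounded by $C^n_\beta$, a k-d tree step-function approximant of the H\"older function with $\asymp n^{q/(2\nu+q)}$ leaves obtained from Assumption~2, the lower-tail bound \eqref{eq:beta_tail1} for the prior-mass condition, and \eqref{eq:beta_tail2} together with the tree prior for the complement of the sieve. The one caveat is your claimed second-order bound $K_n \vee V_n \lesssim (C^n_g)^2 \norm{f-f_0}_{2,n}^2$: Assumption~1 controls only $\nabla g/g$, not the Hessian of $\log g$, so the paper works instead with the first-order bound $K_n \vee V_n \lesssim C^n_g \sum_k \norm{\beta_k-\beta_k^0}_1$; this shrinks the required step-height radius from $\varepsilon_n/C^n_g$ to roughly $\varepsilon_n^2/C^n_g$, but since $C^n_g$ grows only polynomially this changes the exponent by at most $\log n$ factors and your bookkeeping goes through unchanged.
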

\vspace{-10pt}
\begin{proof}
Proof is given in the appendix.
\end{proof}

\paragraph{Remark:}
Interestingly, the posterior concentration rates derived in Theorems \ref{theorem: step}-\ref{theorem: smooth}, do not depend on the number of trees $T$ in the generalized BART ensemble. In other words the concentration rate is equally valid for a single tree (i.e. $T=1$), as well as for tree ensembles (i.e. $T > 1$), when the true regression function $f_0$ is $\nu$-H\"older continuous with $0 < \nu \le 1$. However as has been seen in multiple empirical applications \citep{chipman2010bart}, Bayesian forests consisting of multiple trees provide superior out-of-sample predictive performance, compared to a single tree, the reason being that multiple weak tree learners, when woven together into a forest, can accommodate a wider class of partitions, as opposed to a single tree. 

This phenomenon can be reinforced by theoretical results, such as Theorem 6.1 of \cite{rockova2020posterior}. When the true function $f_0$ is of the form $f_0 = \sum_{t=1}^{T_0} f_0^t$, where $f_0^t$ is a $\nu_t$-H\"older continuous function, with $0 \le \nu^t \le 1$ and $T_0 \to \infty$, a forest with multiple trees have a posterior concentration rate equal to $\varepsilon_n^2 = \sum_{t=1}^{T_0} n^{-2\nu_t/(2\nu_t+p)}\log n$, provided $T_0 \lesssim n$, whereas single regression trees fail to recognize the additive nature of the true function and attain a slower concentration rate. A similar result is presented in Theorem 4 of \cite{linero2017bayesian}, under a kernel-smoothed version of the BART prior. 

Although the BART prior considered by \cite{rockova2020posterior} is fundamentally different from the classical BART prior \citep{chipman2010bart} considered here, their result on additive functions can be replicated in the present set up as well, provided we allow the number of trees $T$ in the BART ensemble to be stochastic. In particular, we might assume that $\pi(T) \propto e^{-C_T T}$, for $T \in \mathbb{N} \setminus \{0\}$, with $C_T > \log 2$,
thus enabling the number of trees in the forest to adapt to unknown $T_0$, as $n, p \rightarrow \infty$.

\section{Implications}\label{sec: implications}

The primary significance of Theorems \ref{theorem: step}, \ref{theorem: monotone} and \ref{theorem: smooth} is that these results provide a frequentist theoretical justification for superior empirical performance of generalized Bayesian trees and forests, claiming that the posterior concentrates around the truth at a near-optimal learning rate. As demonstrated below, we can show that the original BART model \citep{chipman2010bart}, along with some of its commonly used variants (such as BART for multi-class classification and regression on count data) have near-optimal posterior concentration rates, as direct corollaries of Theorems \ref{theorem: step} - \ref{theorem: smooth}. Another important consequence of these results is that (see Section \ref{parsimony} of the supplementary material), they show that the posterior distribution on the number of leaves in a generalized Bayesian tree does not exceed the optimal number of splits by more than a constant multiple and hence are resilient to overfitting.

Below we demonstrate the breadth of applicability of Theorems \ref{theorem: step}, \ref{theorem: monotone} and \ref{theorem: smooth} in proving analogous theoretical results for a wide range of commonly used BART models. 

\paragraph{Continuous Regression:} 
For a (multivariate) continuous regression, assume that the response $\Y \C \X \sim \mathcal{N}_p(\bm{\mu}(\X),\Sigma)$, for some positive definite $\Sigma$. The function $g(f_0(\X)) = g(\bm{\mu})=e^{-\bm{\mu}^T \Sigma^{-1}\bm{\mu}/2}$ satisfies \eqref{eq:g_condition} with $B_n=[-n,n]^p$ and $C_g^n= n \lambda(\Sigma) $, where $\lambda(\Sigma)$ denotes the maximum eigenvalue of $\Sigma$. Hence from Theorems \ref{theorem: step}, \ref{theorem: monotone} and \ref{theorem: smooth}, we can conclude that for continuous regression, the G-BART estimator has a near-minimax posterior concentration rate, provided that the true function $f_0$ connecting the input $\X$ with the output $\Y$ is either a step function, a monotone function or a $\nu$-H\"{o}lder continuous function with $0 < \nu \le 1$.

\paragraph{Classification with Gaussian Step Heights:}
For a $p$-class classification the response $\Y$ can be written as a $p$ dimensional binary vector that has $1$ at the $l$-th coordinate if $\Y$ belongs to category $l \in \{1,\dots,p\}$ and $0$ elsewhere. We can assume $\Y \C \X \sim \text{Multinomial}(1; \bm{\pi}(\X))$
for some $\bm{\pi}: \mathbb{R}^q \in (0,1)^p$ such that $\bm{\pi}'\textbf{1}_p = 1$. The unrestricted function $f_0(\X)$ can be transformed to the natural parameter $\pi(\X)$ by a logistic (softmax) or an inverse-probit link function \citep{chipman2010bart} denoted by $\Psi(\cdot)$, so that $\pi(\X) = \Psi(f_0(\X))$. In either case, the function $g(f_0(\X)) = 1$ trivially satisfies condition \eqref{eq:g_condition}. Hence from Theorem \ref{theorem: step} and Theorem \ref{theorem: smooth}, we can conclude that the BART model for multi-class classification has a near-minimax posterior concentration rate.

\paragraph{Classification with Dirichlet Step-Heights}
For the same multi-class classification problem with $p$ classes described above, an alternative prior specification is recommended by \cite{denison1998bayesian}. The parameters $\bm{\pi}(\X)$ can be approximated by multivariate step functions of the form $f_{\mT,{P}}(\x)=\sum_{k=1}^K P_k\mathbb{I}(\x\in\Omega_k)$
on a tree-partition $\{\Omega_k\}_{k=1}^K$. \cite{denison1998bayesian} assumes that $P_k = \left (P_{k1},\dots,P_{kp} \right ) \stackrel{i.i.d}{\sim} \text{Dirichlet}(\alpha_1,\dots,\alpha_p)$, where $\alpha_l > 0, \quad \forall l \in \{1,\dots,p\}$. For example, in a binary classification ($p=2$) problem, we can assign prior $P_k \stackrel{i.i.d}{\sim} \text{Beta}(2,2)$ on the step-heights. The prior $\text{Beta}(2,2)$ violates condition \eqref{eq:beta_tail1}. But we can show that this estimator has a near-optimal posterior concentration rate, even if we cannot conclude this from the results discussed in Section \ref{sec:results}. A proof is given in the supplementary material. This demonstrates that the assumptions we make in Section \ref{sec:results} are merely {\it sufficient} but not {\it necessary} conditions for proving that the generalized Bayesian tree estimator has a near-minimax posterior concentration rate.

\paragraph{Count Regression:}
For count response variable, $\Y \sim Poisson \left[\lambda(\X) \right]$ with $\lambda(\X) > 0$. There are several choices for the link function $\Psi(\cdot)$ to map the unconstrained function $f_0(\X)$ to the constrained parameter $\lambda(\X)$. The posterior concentration rate of the Generalized Bayesian tree estimator might differ depending on which link function is used. For example, if we use $\Psi(z) = \log \left( 1 + \exp(z)\right)$, the softplus link function, then $g(f_0(\X))= 1/(1 + \exp \left (f_0(\X) \right ) $, trivially satisfies condition \eqref{eq:g_condition} and we can conclude that the generalized tree estimator has a near-minimax concentration rate from Theorems \ref{theorem: step}, \ref{theorem: monotone} and \ref{theorem: smooth}.

In contrast, if we use $\Psi(z) = \exp(z)$ as the link function, then $g(f_0(\X))= \exp \left(-\exp(f_0(\X))\right)$ does not satisfy the condition \eqref{eq:g_condition}, when the true function $f_0$ is a $\nu$-H\"{o}lder continuous function. Therefore we cannot apply Theorem \ref{theorem: smooth} anymore to imply that the generalized tree estimator has a near-optimal rate of posterior concentration. When $f_0$ is a step function with complexity $K_{f_0}$, the condition \eqref{eq:g_condition} is satisfied with $B_n=[-K_{f_0}\log n,K_{f_0}\log n]$ and $C_g^n=n^{K_{f_0}}$. The posterior concentration rate becomes $\varepsilon_n=n^{-\frac{1-\alpha}{2}}\sqrt{K_{f_0} \log^{2\eta}(n/ K_{f_0})}$ under the assumption $K_{f_0} \lesssim n^{\alpha}$ for some $0< \alpha < 1$. This is slower than the near-optimal concentration rate $n^{-\frac{1}{2}}\sqrt{K_{f_0}\log^{2\eta}(n/K_{f_0})}$, if we use $\Psi(z) = \log \left( 1 + \exp(z)\right)$, the softplus link function, instead.

Despite these theoretical limitations, our empirical experiments on simulated data (Section \ref{sec:software}, Figures \ref{fig:deviance_links}) show that the canonical `log' link performs comparably to the `softplus' link with deviance being only slightly higher than those for `softplus' link for most examples. The difference in performance between the two link functions was not statistically significant (Wilcoxon rank-sum test $p=0.587$). This reinforces the observation that the assumptions in Section \ref{sec:results} are sufficient but not necessary conditions for near-minimax concentration. While we could not provide a formal proof for the `log' link, the empirical results suggest the log link may still achieve a near-minimax rate, albeit there is a possibility of a slightly slower multiplier in the concentration rate, than the multiplier $\sqrt{\log {n}}$ in Theorems \ref{theorem: monotone}-\ref{theorem: smooth}.

\section{GBART Software}\label{sec:software}
We provide software for implementing BART models for exponential family response variables in both R and Python. The \texttt{GBART} Python package enables model fitting through a single function call, supporting all exponential family distributions and link functions available in the \texttt{glm} function in R. The package is built as a wrapper around the PyMC-BART library, extending it with a streamlined interface for generalized BART modeling that mirrors the syntax familiar to statisticians working with generalized linear models. To facilitate adoption across disciplines, we provide a quick start guide for R and Python users, alongside the package at \url{https://github.com/Enakshi-Saha/gbart}. The package contains unit tests for reliability and verification of any new future updates.

The \texttt{GBART} package supports all exponential family distributions and their canonical link functions, as well as additional link functions identified by the posterior concentration theory developed in this paper, specifically, alternative link functions that carry theoretical guarantees of near-minimax posterior concentration rates under the conditions established in Theorems \ref{theorem: step}-\ref{theorem: smooth} (e.g. the `softplus' link for the Poisson family). Table \ref{tab:gbart_links} summarizes the full set of supported distributions and link functions. For R users, the package is accessible via the \texttt{reticulate} interface, with detailed instructions provided in the GitHub repository.

\paragraph{Remark:} Although the theoretical results in this paper are established for a modified tree prior with faster-decaying splitting probabilities, the \texttt{GBART} package implements the original BART prior of Chipman et al. (2010), which is the conventional choice across all existing BART software and has shown strong empirical performance in practice.

We compare \texttt{GBART} with the two most widely used R packages for BART regression and classification, \texttt{BART} and \texttt{dbarts}, on simulated and real datasets, to ensure that \texttt{GBART} maintains predictive performance of these models, compared to existing software, while providing an easy interface for Python users, along with additional functionalities for other exponential family distributions. Across these benchmarks, \texttt{GBART} achieves comparable predictive performance to both packages. Beyond replicating existing functionality, \texttt{GBART} offers two distinct advantages: it brings BART modeling to researchers working in Python, and it extends the BART framework to the full exponential family, supporting response distributions not available in any existing software.

Regarding computational cost, \texttt{GBART} generates 10,000 MCMC samples after 5,000 burn-in iterations with a median training time of approximately 19 minutes for regression and 27 minutes for classification on a node equipped with dual Intel Xeon Platinum 8358 CPUs (64 cores, 2.60GHz) and 251 GB RAM, which is practical for most applied settings. The longer runtime relative to \texttt{BART} and \texttt{dbarts} reflects the difference between Python and the optimized C++ backends underlying the R packages, rather than any algorithmic inefficiency. For practitioners working primarily with logit or probit models for classification and regression with Gaussian responses in R, we recommend using the existing R packages as the most efficient software choice. For Python users and for non-Gaussian exponential family responses, \texttt{GBART} fills a critical gap by offering a useful complement to existing software.

We note that \texttt{BART} provides specialized models such as survival analysis that are outside the current scope of \texttt{GBART}, and we view these as complementary tools rather than direct competitors. Table~\ref{tab:compute_time} reports computation times across packages on simulated data.
\begin{table}[ht]
\centering
\caption{Exponential family distributions and supported link functions 
         in the \texttt{GBART} package.}
\label{tab:gbart_links}
\begin{tabular}{ll}
\toprule
\textbf{Family} & \textbf{Supported Link Functions} \\
\midrule
Gaussian         & identity, log, inverse \\
Binomial         & logit, probit, cauchit, log, cloglog \\
Gamma            & inverse, identity, log, softplus \\
Poisson          & log, softplus, identity, sqrt \\
Inverse Gaussian & inverse squared, inverse, identity, log \\
\bottomrule
\end{tabular}
\end{table}

\begin{figure}[htbp]
    \centering
    \begin{minipage}{0.48\textwidth}
        \centering
        \includegraphics[width=\textwidth]{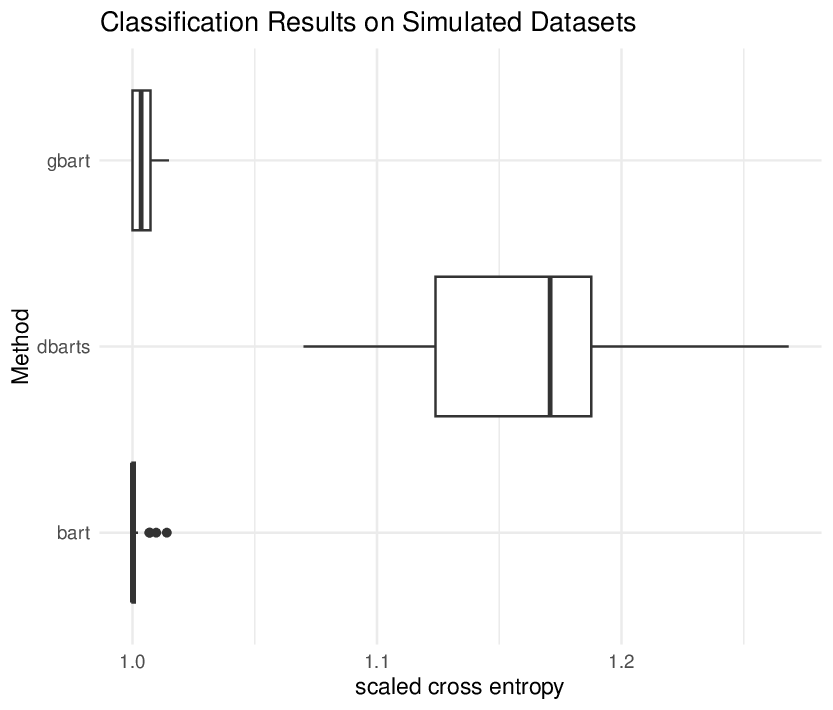}
    \end{minipage}
    \hfill
    \begin{minipage}{0.48\textwidth}
        \centering
        \includegraphics[width=\textwidth]{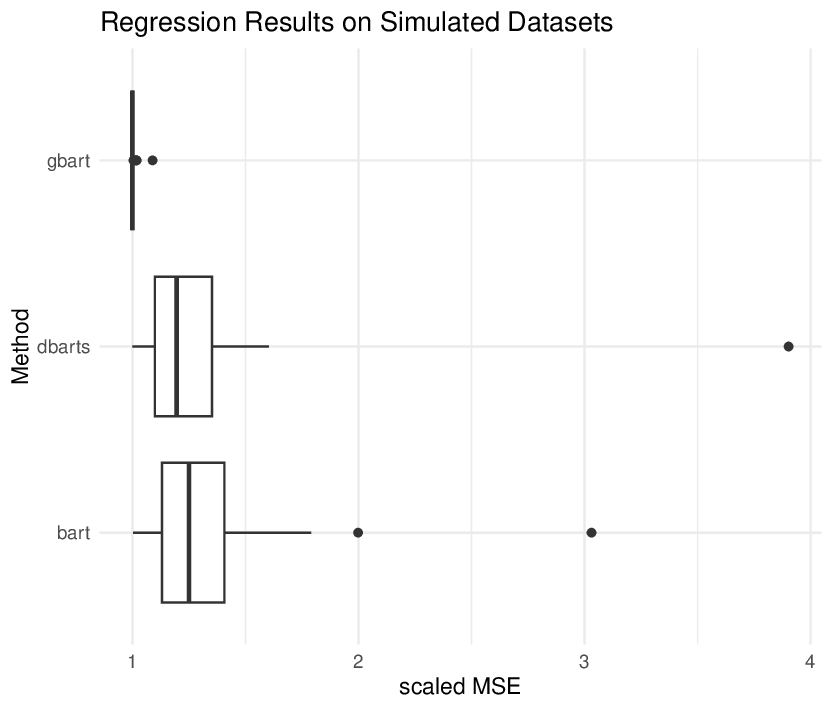}
    \end{minipage}
    \caption{Classification (left) and Regression (right) performance on simulated datasets: boxplots for MSE (regression) and CE (classification) loss observed across $30$ simulated datasets each for regression and classification respectively.}
    \label{fig:simulation_boxplots}
\end{figure}

We compare the performance of \texttt{GBART} with the R packages \texttt{BART} and \texttt{dbarts} on three simulation settings each for Gaussian regression and binary classification with `probit' link. In the regression setting, we simulate $Y \sim N(f_0(X), 1)$, and in the classification setting, we simulate $Y \sim Bin(1,\Phi(f_0(X)))$ where $\Phi$ denotes the probit link function and the true regression function $f_0(X)$ is one of the three Friedman functions \citep{friedman1991multivariate, friedman1993} described below.

\begin{align}
\textbf{Friedman 1:} \quad f_1(\mathbf{x}) &= 10\sin(\pi x_1 x_2) + 20\left(x_3 - 0.5\right)^2 + 10x_4 + 5x_5 \label{eq:friedman1} \\[6pt]
\textbf{Friedman 2:} \quad f_2(\mathbf{x}) &= \sqrt{x_1^2 + \left(x_2 x_3 - \frac{1}{x_2 x_4}\right)^2} \label{eq:friedman2} \\[6pt]
\textbf{Friedman 3:} \quad f_3(\mathbf{x}) &= \arctan\left(\frac{x_2 x_3 - \dfrac{1}{x_2 x_4}}{x_1}\right) \label{eq:friedman3}
\end{align}

The Friedman functions have been widely used for benchmarking tree-based regression models including BART \citep{chipman2010bart, linero2025generalized}. For each setting we simulate 10 random datasets, each with $500$ training and $500$ test samples. We simulate multivariate normal covariates $X \sim N_{20}(0, 0.1\mathbf{I})$, where $\mathbf{I}$ is the identity matrix. Of these 20 covariates, only the first five influence $Y$ for Friedman 1 and the first four for Friedman functions 2 and 3; the remainder are irrelevant. For Friedman 2 and Friedman 3, covariates are scaled to $[0,1]$ prior to applying the function to avoid numerical instability arising from near-zero denominators.

For each of the simulated datasets we train the suitable BART model (regression or classification) with $10{,}000$ posterior draws after discarding $5{,}000$ burn-in samples from a single Markov Chain Monte Carlo (MCMC) chain. We evaluate predictions on the test data and compare the three packages with respect to mean squared error (MSE) for regression and cross-entropy loss (CE) for classification on the test set. As is customary in the BART literature \citep{chipman2010bart}, we report scaled MSE and scaled CE, defined as the MSE or CE of each package divided by the minimum across the three packages for that dataset, so that a value of 1 indicates the best-performing package. Results across the three Friedman functions are summarized in Figure~\ref{fig:simulation_boxplots}, and results disaggregated by function are reported in Tables~\ref{tab: regression_simulation_friedman_scaled_mse} and~\ref{tab:classification_simulation_friedman_scaled_ce}. We observe that \texttt{GBART} performs comparably to the R packages \texttt{BART} (denoted \texttt{BART.R} in the figures and tables) and \texttt{dbarts} for both regression and classification.

\begin{figure}[htbp]
    \centering
    \begin{minipage}{0.48\textwidth}
        \centering
        \includegraphics[width=\textwidth]{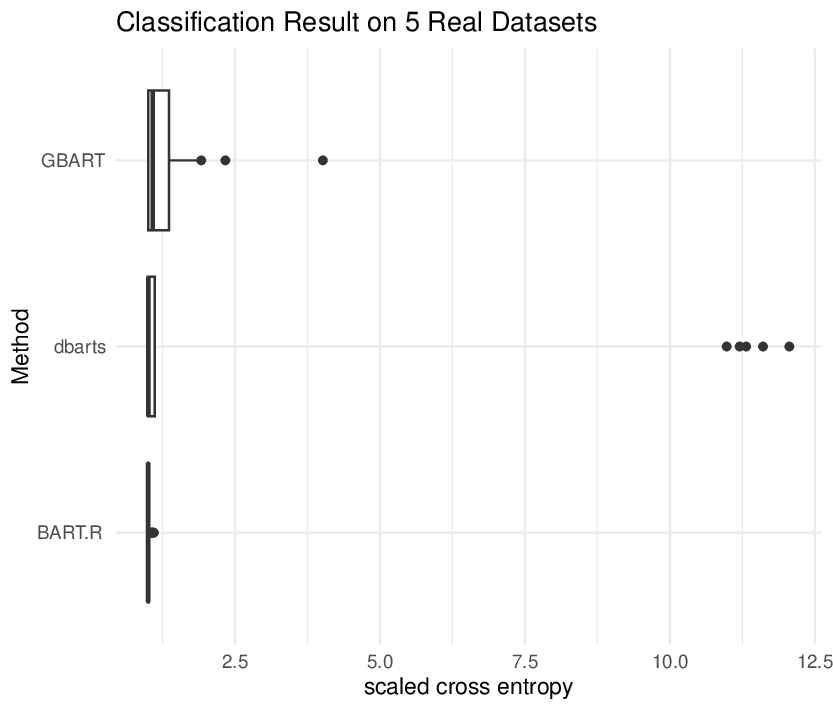}
    \end{minipage}
    \hfill
    \begin{minipage}{0.48\textwidth}
        \centering
        \includegraphics[width=\textwidth]{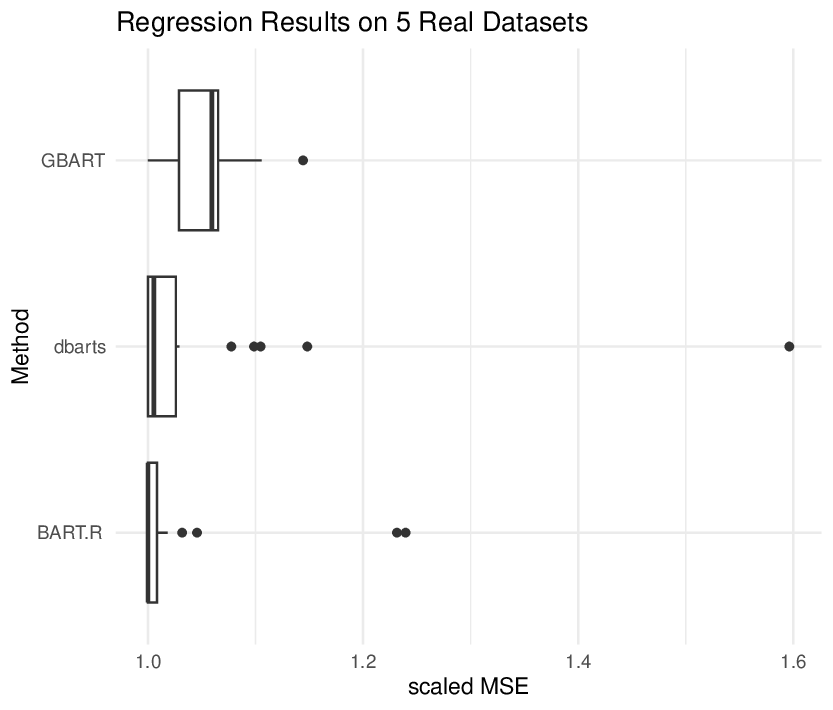}
    \end{minipage}
    \caption{Classification (left) and Regression (right) performance on real data: boxplots for MSE (regression) and CE (classification) loss observed across $5$ real datasets each for regression and classification respectively}
    \label{fig:realdata_boxplots}
\end{figure}

Next, we compare \texttt{GBART} with the existing R packages on 10 real datasets, 5 each for regression and classification. We repeat 10-fold cross-validation for 5 independent random splits of each dataset and evaluate MSE for regression and CE for classification on the held-out data for each random split. On the real datasets as well, \texttt{GBART} performs comparably to the R packages \texttt{BART} (denoted \texttt{BART.R} in the figures and tables) and \texttt{dbarts}. Results aggregated across all datasets are summarized in Figure~\ref{fig:realdata_boxplots}, and results disaggregated by dataset, along with data sources and sample sizes, are reported in Tables~\ref{tab:regression_datasets} and~\ref{tab:classification_datasets}.

\begin{figure}
    \centering
    \includegraphics[width=0.95\textwidth]{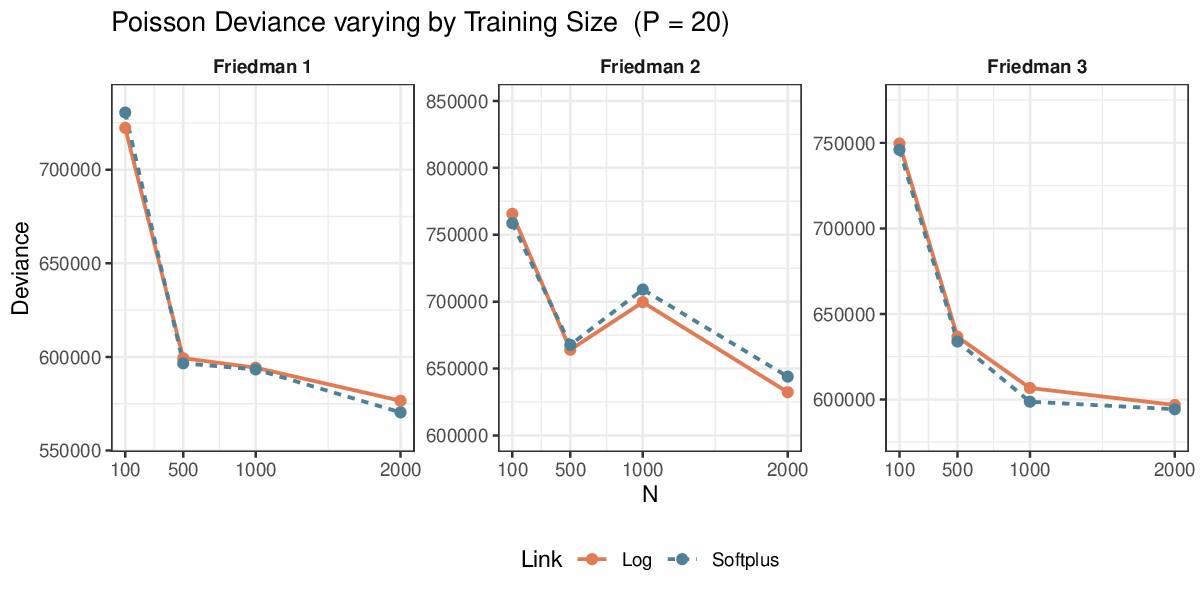}
    \vspace{0.5em}
    \includegraphics[width=0.95\textwidth]{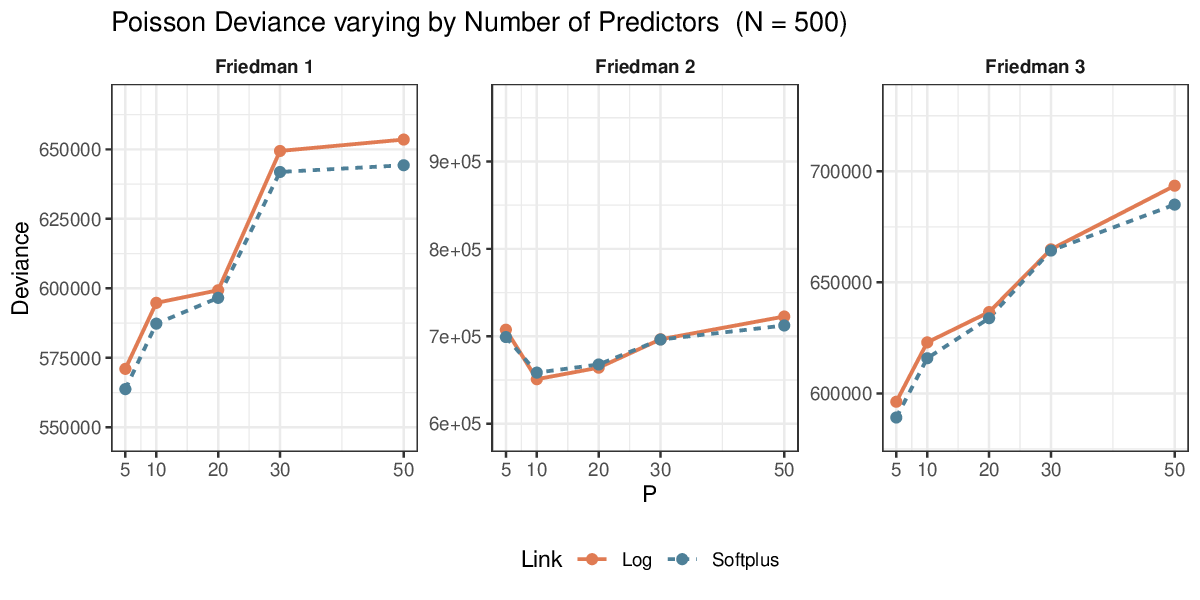}
    \caption{Poisson deviance computed on a test set of size $500$, using GBART with two different link functions: log and softplus. Top: Median deviance vs training data size $N$ with covariate dimension $P = 20$.
             Bottom: Median deviance vs $P$ with $N = 500$.}
    \label{fig:deviance_links}
\end{figure}

We apply \texttt{GBART} for regression on count data using two link functions: the canonical `log' link and the `softplus' link discussed in section \ref{sec: implications}. We simulate $Y \sim Poisson(|f_0(X)|)$, where $|\cdot|$ denotes the absolute value and the true regression function $f_0(X)$ is one of the three Friedman functions described in \eqref{eq:friedman1}-\eqref{eq:friedman3}. We compare across varying training sample sizes and varying number of covariates to evaluate whether any of the link functions perform better than the other. To compare how performance varies across number of training samples, we fix number of covariates to $p = 20$ and simulate ten random datasets consisting of $N \in \{100, 500, 1000, 2000\}$ training and $500$ test samples for each of the three Friedman functions (Figure \ref{fig:deviance_links} top). To compare how performance varies across number of covariates, we simulate ten random datasets consisting of $500$ training and $500$ test samples for each of the three Friedman functions and vary the number of covariates across $p \in \{10, 20, 50\}$ (Figure \ref{fig:deviance_links} bottom). For each setting we simulate $2000$ MCMC samples after $1000$ burn-in period.
 As sample size increases, deviance decreases for both link functions and as dimensions of covariates increase, deviance also increases for both. We did not observe any significanct difference in performance between the `log' and `softplus' links (p-value of Wilcoxon rank-sum test is $0.587$). Detailed simulation results are provided in Table \ref{tab:scaled_deviance}.
 
\section{Discussion}\label{sec:discussion}
In this paper we have examined a general framework for Bayesian Additive Regression Tree Models that encapsulates various conventional BART models adapted to a wide range of regression and classification tasks. We demonstrated that these models have a near-minimax posterior concentration rate for step functions, coordinatewise monotone functions, and H\"{o}lder continuous functions, thus corroborating the empirical success of BART and its variants, from a theoretical perspective. These results also build the foundation for uncertainty quantification statements for a wide variety of BART models, opening up an interesting avenue for future research; in particular, extending the semiparametric Bernstein-von Mises theorem of \cite{rockova2020semi} to the G-BART setting would be a natural next step. 
Finally, to facilitate adoption of G-BART in practice, we have developed the \texttt{GBART} package, built on \texttt{PyMC-BART}, which implements G-BART for a range of exponential family distributions with a user-friendly interface modelled on R's \texttt{glm} function. An interesting direction for future work is the empirical investigation of non-Gaussian step-height distributions, such as the Laplace distribution, which our theoretical results show should also possess near-minimax concentration rates. However, implementing these within the Bayesian backfitting framework would require modifications to the sampling algorithm and is beyond the scope of the current package. We plan to explore this in future versions of \texttt{GBART}, alongside computational improvements as more efficient \texttt{PyMC-BART} implementations become available.

\section{Data Availability Statement}
Only publicly available real datasets are used in the paper. Software is available at \url{https://github.com/Enakshi-Saha/gbart}.

\section{Declaration of Interest}
The authors do not have any competing interests.

\bibliographystyle{apalike} 
\bibliography{references}       

@article{diaconis1986consistency,
  title={On the consistency of Bayes estimates},
  author={Diaconis, Persi and Freedman, David},
  journal={The Annals of Statistics},
  pages={1--26},
  year={1986},
  publisher={JSTOR}
}

@article{ghosal2007convergence,
  title={Convergence rates of posterior distributions for noniid observations},
  author={Ghosal, Subhashis and Van Der Vaart, Aad and others},
  journal={The Annals of Statistics},
  volume={35},
  number={1},
  pages={192--223},
  year={2007},
  publisher={Institute of Mathematical Statistics}
}

@article{castillo2014bernstein,
  title={On the Bernstein--von Mises phenomenon for nonparametric Bayes procedures},
  author={Castillo, Isma{\"e}l and Nickl, Richard and others},
  journal={The Annals of Statistics},
  volume={42},
  number={5},
  pages={1941--1969},
  year={2014},
  publisher={Institute of Mathematical Statistics}
}

@article{cox1993analysis,
  title={An analysis of Bayesian inference for nonparametric regression},
  author={Cox, Dennis D},
  journal={The Annals of Statistics},
  pages={903--923},
  year={1993},
  publisher={JSTOR}
}

@article{murray2017log,
  title={Log-linear Bayesian additive regression trees for multinomial logistic and count regression models},
  author={Murray, Jared S},
  journal={Journal of the American Statistical Association},
  volume={116},
  number={534},
  pages={756--769},
  year={2021},
  publisher={Taylor \& Francis}
}

@article{chipman1998bayesian,
  title={Bayesian {CART} model search},
  author={Chipman, Hugh A and George, Edward I and McCulloch, Robert E},
  journal={Journal of the American Statistical Association},
  volume={93},
  number={443},
  pages={935--948},
  year={1998},
  publisher={Taylor \& Francis}
}

@inproceedings{rockova2019theory,
  title={On Theory for {BART}},
  author={Rockova, Veronika and Saha, Enakshi},
  booktitle={The 22nd International Conference on Artificial Intelligence and Statistics},
  pages={2839--2848},
  year={2019}
}

@article{stone1982optimal,
  title={Optimal global rates of convergence for nonparametric regression},
  author={Stone, Charles J},
  journal={The annals of statistics},
  pages={1040--1053},
  year={1982},
  publisher={JSTOR}
}

@inproceedings{he2019xbart,
  title={{XBART}: Accelerated {Bayesian} Additive Regression Trees},
  author={He, Jingyu and Yalov, Saar and Hahn, P Richard},
  booktitle={The 22nd International Conference on Artificial Intelligence and Statistics},
  pages={1130--1138},
  year={2019}
}

@article{pratola2019heteroscedastic,
  title={Heteroscedastic {BART} via multiplicative regression trees},
  author={Pratola, MT and Chipman, HA and George, EI and McCulloch, RE},
  journal={Journal of Computational and Graphical Statistics},
  pages={1--13},
  year={2019},
  publisher={Taylor \& Francis}
}

@article{hill2013assessing,
  title={Assessing lack of common support in causal inference using {Bayesian} nonparametrics: Implications for evaluating the effect of breastfeeding on children's cognitive outcomes},
  author={Hill, Jennifer and Su, Yu-Sung},
  journal={The Annals of Applied Statistics},
  pages={1386--1420},
  year={2013},
  publisher={JSTOR}
}

@article{linero2018bayesian,
  title={Bayesian regression trees for high-dimensional prediction and variable selection},
  author={Linero, Antonio R},
  journal={Journal of the American Statistical Association},
  volume={113},
  number={522},
  pages={626--636},
  year={2018},
  publisher={Taylor \& Francis}
}

@inproceedings{du2019interaction,
  title={Interaction detection with {Bayesian} decision tree ensembles},
  author={Du, Junliang and Linero, Antonio R},
  booktitle={The 22nd International Conference on Artificial Intelligence and Statistics},
  pages={108--117},
  year={2019},
  organization={PMLR}
}

@article{deshpande2020vc,
  title={{VC-BART}: {Bayesian} trees for varying coefficients},
  author={Deshpande, Sameer K and Bai, Ray and Balocchi, Cecilia and Starling, Jennifer E},
  journal={arXiv preprint arXiv:2003.06416},
  year={2020}
}

@article{pratola2014parallel,
  title={Parallel {Bayesian} additive regression trees},
  author={Pratola, Matthew T and Chipman, Hugh A and Gattiker, James R and Higdon, David M and McCulloch, Robert and Rust, William N},
  journal={Journal of Computational and Graphical Statistics},
  volume={23},
  number={3},
  pages={830--852},
  year={2014},
  publisher={Taylor \& Francis}
}

@misc{sparapani2019bart,
  title={The {BART} {R} package},
  author={Sparapani, Rodney and Spanbauer, Charles and McCulloch, Robert},
  year={2019}
}

@article{liu2020variable,
  title={Variable Selection via Thompson Sampling},
  author={Liu, Yi and Rockova, Veronika},
  journal={arXiv preprint arXiv:2007.00187},
  year={2020}
}

@article{rockova2020posterior,
  title={Posterior concentration for {Bayesian} regression trees and forests},
  author={Rockova, Veronika and van der Pas, St{\'e}phanie and others},
  journal={Annals of Statistics},
  volume={48},
  number={4},
  pages={2108--2131},
  year={2020},
  publisher={Institute of Mathematical Statistics}
}

@article{jeong2020art,
  title={The art of BART: Minimax optimality over nonhomogeneous smoothness in high dimension},
  author={Jeong, Seonghyun and Rockova, Veronika},
  journal={Journal of Machine Learning Research},
  volume={24},
  number={337},
  pages={1--65},
  year={2023}
}

@article{castillo2019multiscale,
  title={Uncertainty quantification for Bayesian CART},
  author={Castillo, Ismael and Rockov{\'a}, Veronika},
  journal={The Annals of Statistics},
  volume={49},
  number={6},
  pages={3482--3509},
  year={2021},
  publisher={Institute of Mathematical Statistics}
}

@article{dorie2025package,
  title={Package ‘dbarts’},
  author={Dorie, Vincent and Chipman, Hugh and McCulloch, Robert and Dadgar, Armon and Team, R Core and Draheim, Guido U and Bosmans, Maarten and Tournayre, Christophe and Petch, Michael and de Lucena Valle, Rafael and others},
  journal={},
  year={2025}
}

@article{li2024dynamic,
  title={Dynamic treatment regimes using bayesian additive regression trees for censored outcomes},
  author={Li, Xiao and Logan, Brent R and Hossain, SM Ferdous and Moodie, Erica EM},
  journal={Lifetime Data Analysis},
  volume={30},
  number={1},
  pages={181--212},
  year={2024},
  publisher={Springer}
}

@inproceedings{rockova2020semi,
  title={On Semi-parametric Inference for {BART}},
  author={Rockova, Veronika},
  booktitle={International Conference on Machine Learning},
  pages={8137--8146},
  year={2020},
  organization={PMLR}
}

@book{stein2009real,
  title={Real analysis: measure theory, integration, and Hilbert spaces},
  author={Stein, Elias M and Shakarchi, Rami},
  year={2009},
  publisher={Princeton University Press}
}

@article{quiroga2022bayesian,
  title={Bayesian additive regression trees for probabilistic programming},
  author={Quiroga, Miriana and Garay, Pablo G and Alonso, Juan M and Loyola, Juan Martin and Martin, Osvaldo A},
  journal={arXiv preprint arXiv:2206.03619},
  year={2022}
}

@article{gao2017minimax,
  title={ON ESTIMATION OF ISOTONIC PIECEWISE CONSTANT SIGNALS},
  author={Gao, Chao and Han, Fang and Zhang, Cun-Hui},
  journal={The Annals of Statistics},
  volume={48},
  number={2},
  pages={629--654},
  year={2020},
  publisher={JSTOR}
}

@article{biau2003risk,
  title={On the risk of estimates for block decreasing densities},
  author={Biau, G{\'e}rard and Devroye, Luc},
  journal={Journal of multivariate analysis},
  volume={86},
  number={1},
  pages={143--165},
  year={2003},
  publisher={Elsevier}
}

@article{liu2018abc,
  title={ABC Variable Selection with {Bayesian} Forests},
  author={Liu, Yi and Rockova, Veronika and Wang, Yuexi},
  journal={arXiv preprint arXiv:1806.02304},
  year={2018}
}

@article{denison1998bayesian,
  title={A {Bayesian} {CART} algorithm},
  author={Denison, David GT and Mallick, Bani K and Smith, Adrian FM},
  journal={Biometrika},
  volume={85},
  number={2},
  pages={363--377},
  year={1998},
  publisher={Oxford University Press}
}

@article{sparapani2016nonparametric,
  title={Nonparametric survival analysis using {Bayesian} additive regression trees ({BART})},
  author={Sparapani, Rodney A and Logan, Brent R and McCulloch, Robert E and Laud, Purushottam W},
  journal={Statistics in medicine},
  volume={35},
  number={16},
  pages={2741--2753},
  year={2016},
  publisher={Wiley Online Library}
}

@inproceedings{van2017bayesian,
  title={Bayesian dyadic trees and histograms for regression},
  author={van der Pas, St{\'e}phanie and Rockova, Veronika},
  booktitle={Advances in neural information processing systems},
  pages={2089--2099},
  year={2017}
}

@article{starling2020bart,
  title={Bart with targeted smoothing: An analysis of patient-specific stillbirth risk},
  author={Starling, Jennifer E and Murray, Jared S and Carvalho, Carlos M and Bukowski, Radek K and Scott, James G and others},
  journal={Annals of Applied Statistics},
  volume={14},
  number={1},
  pages={28--50},
  year={2020},
  publisher={Institute of Mathematical Statistics}
}

@article{nelder1972generalized,
  title={Generalized linear models},
  author={Nelder, John Ashworth and Wedderburn, Robert WM},
  journal={Journal of the Royal Statistical Society: Series A (General)},
  volume={135},
  number={3},
  pages={370--384},
  year={1972},
  publisher={Wiley Online Library}
}

@article{chipman2010bart,
  title={BART: Bayesian additive regression trees},
  author={Chipman, Hugh A and George, Edward I and McCulloch, Robert E and others},
  journal={The Annals of Applied Statistics},
  volume={4},
  number={1},
  pages={266--298},
  year={2010},
  publisher={Institute of Mathematical Statistics}
}

@article{bleich2014variable,
  title={Variable selection for BART: an application to gene regulation},
  author={Bleich, Justin and Kapelner, Adam and George, Edward I and Jensen, Shane T},
  journal={The Annals of Applied Statistics},
  pages={1750--1781},
  year={2014},
  publisher={JSTOR}
}

@article{hill2011bayesian,
  title={Bayesian nonparametric modeling for causal inference},
  author={Hill, Jennifer L},
  journal={Journal of Computational and Graphical Statistics},
  volume={20},
  number={1},
  pages={217--240},
  year={2011},
  publisher={Taylor \& Francis}
}

@article{hahn2017bayesian,
  title={Bayesian regression tree models for causal inference: regularization, confounding, and heterogeneous effects},
  author={Hahn, P Richard and Murray, Jared S and Carvalho, Carlos},
  journal={arXiv preprint arXiv:1706.09523},
  year={2017}
}

@article{taddy2011dynamic,
  title={Dynamic trees for learning and design},
  author={Taddy, Matthew A and Gramacy, Robert B and Polson, Nicholas G},
  journal={Journal of the American Statistical Association},
  volume={106},
  number={493},
  pages={109--123},
  year={2011},
  publisher={Taylor \& Francis}
}

@article{chipman2016high,
  title={mBART: multidimensional monotone BART},
  author={Chipman, Hugh A and George, Edward I and McCulloch, Robert E and Shively, Thomas S},
  journal={Bayesian Analysis},
  volume={17},
  number={2},
  pages={515--544},
  year={2022},
  publisher={International Society for Bayesian Analysis}
}

@article{linero2017bayesian,
  title={Bayesian regression tree ensembles that adapt to smoothness and sparsity},
  author={Linero, Antonio Ricardo and Yang, Yun},
  journal={arXiv preprint arXiv:1707.09461},
  year={2017}
}

@article{linero2025generalized,
  title={Generalized Bayesian additive regression trees models: Beyond conditional conjugacy},
  author={Linero, Antonio R},
  journal={Journal of the American Statistical Association},
  volume={120},
  number={549},
  pages={356--369},
  year={2025},
  publisher={Taylor \& Francis}
}

@article{friedman1991multivariate,
  title={Multivariate adaptive regression splines},
  author={Friedman, Jerome H},
  journal={The annals of statistics},
  volume={19},
  number={1},
  pages={1--67},
  year={1991},
  publisher={Institute of Mathematical Statistics}
}

@techreport{friedman1993,
  author      = {Friedman, Jerome H.},
  title       = {Fast {MARS}},
  institution = {Department of Statistics, Stanford University},
  year        = {1993},
  number      = {110}
}

@book{tsybakov2009introduction,
  title     = {Introduction to Nonparametric Estimation},
  author    = {Tsybakov, Alexandre B.},
  year      = {2009},
  publisher = {Springer},
  address   = {New York},
  series    = {Springer Series in Statistics},
  doi       = {10.1007/b13794}
}

@article{basak2022semiparametric,
  title={Semiparametric analysis of clustered interval-censored survival data using soft Bayesian additive regression trees (SBART)},
  author={Basak, Piyali and Linero, Antonio and Sinha, Debajyoti and Lipsitz, Stuart},
  journal={Biometrics},
  volume={78},
  number={3},
  pages={880--893},
  year={2022},
  publisher={Wiley Online Library}
}

@article{um2023bayesian,
  title={Bayesian additive regression trees for multivariate skewed responses},
  author={Um, Seungha and Linero, Antonio R and Sinha, Debajyoti and Bandyopadhyay, Dipankar},
  journal={Statistics in medicine},
  volume={42},
  number={3},
  pages={246--263},
  year={2023},
  publisher={Wiley Online Library}
}

\newpage
\appendix
\bigskip
\begin{center}
{\large\bf SUPPLEMENTARY MATERIALS}
\end{center}
\renewcommand{\thetable}{S\arabic{table}}
\setcounter{table}{0}

\section{Bayesian CART Prior by Denison et al. (1998)}\label{denison_prior}
We describe the Bayesian CART prior proposed by \cite{denison1998bayesian}. The prior on individual Bayesian trees is assigned conditional on the number of terminal nodes/ leaves $K$ and all prior probability is concentrated on the set of all {\it valid} tree partitions, as defined below (Definition 3.1 of \cite{rockova2020posterior}):
\begin{definition}
Denote by $\bOmega = \{\Omega\}_{k=1}^K$, a partition of $[0,1]^p$, We say that $\bOmega$ is valid if
\begin{equation}
    \mu(\Omega_k) \ge \frac{C}{n} \quad \forall k = 1, \dots, K
\end{equation}
for some $C \in \mathbb{N}\setminus\{0\}$.
\end{definition}
Valid partitions have non-empty cells, where the allocation does not need to be balanced. Now the prior on tree partitions is specified as follows:
\begin{enumerate}
\item The number of leaves in a tree $K$ follows a Poisson distribution with parameter $\lambda > 0$
\begin{equation}
P(K)=\frac{\lambda^K}{(e^\lambda-1)K!}, \qquad k=1,2,\dots
\end{equation}
\item Given the number of leaves $K$, a tree is chosen uniformly at random from the set of all available {\it valid} tree-partitions with $K$ leaves. Number of valid tree partitions is given by
\begin{equation}
\Delta(V_K)=\frac{q^{K-1}n!}{(n-K+1)!}
\end{equation}
This is a slightly modified version of the original prior proposed by \cite{denison1998bayesian}. This modified version was used by \cite{rockova2020posterior} to derive posterior concentration rates for the BART estimator under this prior.
\item At each node, the splitting rule consists of picking a split variable $j$ uniformly at random from the available directions $\{1,\dots, q\}$ and picking a split point $c$ uniformly at random from the data values $\{x_{ij} : \X_i \in \Omega\}$ falling within the current cell $\Omega$.
\end{enumerate}

\section{Preliminary Results with Proof}
\begin{lemma}
The multivariate Gaussian $\mathcal{N}_p ({\bf 0}, \mathbb{I}_p)$ and the multivariate Laplace $\mathcal{L}_p ({\bf 0}, \mathbb{I}_p)$ distribution belong to the general family of distributions with CDF $F_\beta$ that has the following property:
For some $C_1>0$, $0 < C_2 \le 2$ and $C_3 > 0$ and any $t>0$,
    \begin{align}
        F_\beta(\norm{\beta}_\infty \le t) \gtrsim \left (e^{-C_1 t^{C_2}} t \right)^{p} \quad \text{for $t>0$}
    \end{align}
    \vspace{-10pt}
    \begin{align}
        F_\beta(\norm{\beta}_\infty \ge t) \lesssim e^{-C_3 t} \quad \text{for $t \ge 1$}
    \end{align}
\end{lemma}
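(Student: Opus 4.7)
The plan is to verify the two bounds separately for the Gaussian and Laplace cases, exploiting in both that $\beta = (\beta_1,\dots,\beta_p)$ has independent and identically distributed coordinates under $\mathcal{N}_p(\mathbf{0}, \mathbb{I}_p)$ and $\mathcal{L}_p(\mathbf{0}, \mathbb{I}_p)$. Consequently the small-ball probability factorises,
\[
F_\beta(\norm{\beta}_\infty \le t) = \P(|\beta_1| \le t)^p,
\]
while the tail is controlled by a union bound $F_\beta(\norm{\beta}_\infty \ge t) \le p\,\P(|\beta_1| \ge t)$. Thus the problem reduces to proving one-dimensional analogues of \eqref{eq:beta_tail1}--\eqref{eq:beta_tail2} for each marginal.

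For the standard Gaussian marginal, I would bound the density from below by its value at the endpoints of $[-t,t]$, namely $\phi(x) \ge \phi(t) = (2\pi)^{-1/2} e^{-t^2/2}$ for $|x|\le t$, yielding
\[
\P(|\beta_1| \le t) \ \ge\ \frac{2t}{\sqrt{2\pi}}\,e^{-t^2/2}.
\]
Raising this to the $p$-th power recovers the form $(e^{-C_1 t^{C_2}} t)^p$ with $C_1 = 1/2$ and $C_2 = 2$. For the tail I would invoke the standard Gaussian estimate $\P(|\beta_1|\ge t) \le 2 e^{-t^2/2}$; since $t^2/2 \ge t/2$ whenever $t \ge 1$, the union bound gives $F_\beta(\norm{\beta}_\infty \ge t) \le 2p\,e^{-t/2}$, so that \eqref{eq:beta_tail2} holds with $C_3 = 1/2$.

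For the Laplace case with i.i.d.\ standard Laplace coordinates I would compute the marginal CDF of $|\beta_1|$ directly: $\P(|\beta_1|\le t) = 1 - e^{-t}$. The elementary inequality $1 - e^{-t} = \int_0^t e^{-s}\,ds \ge t\,e^{-t}$ then yields $F_\beta(\norm{\beta}_\infty \le t) \ge (t\,e^{-t})^p$, matching \eqref{eq:beta_tail1} with $C_1 = C_2 = 1$. The tail is even simpler: $\P(|\beta_1|\ge t) = e^{-t}$ exactly, so $F_\beta(\norm{\beta}_\infty \ge t) \le p\,e^{-t}$ for every $t > 0$, which gives $C_3 = 1$ in \eqref{eq:beta_tail2}.

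No step is a substantive obstacle; the only thing to keep track of is that the asserted lower bound $(e^{-C_1 t^{C_2}} t)^p$ is required to hold for every $t > 0$, not merely for $t \in (0,1]$ as written in the body of the paper. This is automatic in both derivations, because the one-dimensional lower bound already incorporates a Gaussian (respectively Laplace) decay factor, so the right-hand side remains bounded by the trivial probability bound $1$ for large $t$ without additional work.
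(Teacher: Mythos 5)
Your proposal is correct, and the small-ball half is essentially the paper's own argument: you bound the coordinate density from below by its value at $\pm t$ on the cube $[-t,t]^p$, obtaining $\left(t e^{-t^2/2}\right)^p$ for the Gaussian and $\left(t e^{-t}\right)^p$ for the Laplace (the paper gets exactly these via $e^{-pt^2/2}t^p$ and $e^{-pt}t^p$), so $C_2=2$ and $C_2=1$ respectively. Where you genuinely depart from the paper is the tail bound: you control $F_\beta(\norm{\beta}_\infty \ge t)$ by the coordinatewise union bound $p\,\P(|\beta_1|\ge t)$ together with the standard one-dimensional estimates $\P(|\beta_1|\ge t)\le 2e^{-t^2/2}$ and $\P(|\beta_1|\ge t)=e^{-t}$, whereas the paper writes the sup-norm tail as a $p$-fold product in the Gaussian case and as the exact value $e^{-pt}/2$ in the Laplace case; those expressions correspond to the event that \emph{every} coordinate is large, not that the maximum is, so your union-bound route is the cleaner and more defensible way to reach $\lesssim e^{-C_3 t}$. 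The price is only a multiplicative factor $p$ (and $2/\sqrt{2\pi}$ raised to the $p$ in the lower bound), which is absorbed into the implicit constants of $\lesssim$ and $\gtrsim$ for fixed $p$; if one wanted constants uniform in a growing $p$ this would deserve a remark, but the lemma as stated does not require it. Your closing observation that the lower bound extends from $t\in(0,1]$ to all $t>0$ without extra work is also accurate.
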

\begin{proof}
If $F_\beta= \mathcal{N}_p ({\bf 0}, \mathbb{I}_p)$, then for any $t>0$,
\begin{equation*}
    F_\beta(\norm{\beta}_\infty \le t) \gtrsim \left (e^{- t^2/2} \int_{-t}^t d\beta \right )^p \gtrsim e^{-p t^2 / 2}t^p
\end{equation*}
For $t \ge 1$
\begin{equation*}
    F_\beta(\norm{\beta}_\infty \ge t) \lesssim  \left (e^{-t^2/4} 2\int_{t}^\infty e^{-z^2/4} dz \right )^p \lesssim e^{-C_3t}
\end{equation*}
If $F_\beta = \mathcal{L}_p ({\bf 0}, \mathbb{I}_p)$, then for any $t > 0$,
\begin{equation*}
    F_\beta(\norm{\beta}_\infty \le t) \gtrsim \left (e^{- t} \int_{-t}^t d\beta \right )^p \gtrsim e^{-p t}t^p
\end{equation*}
Also, for any $t \ge 0$,
\begin{equation*}
    F_\beta(\norm{\beta}_\infty \ge t)
    = 1 - \left(1 - e^{-t}\right)^p
    \le p\, e^{-t}
    \lesssim e^{-t},
\end{equation*}
using Taylor series expansion around $0$.
\end{proof}
\begin{lemma}
Let $f$ and $f_0$ denote step functions of the form $f(\X)=\sum_{k=1}^K\beta_k\mathbb{I}(\X\in\Omega_k)$ and $f_0(\X)=\sum_{k=1}^K\beta^0_k\mathbb{I}(\X\in\Omega_k)$ respectively, on a tree-shaped partition $\{\Omega_k\}_{k=1}^K$. Let $P_{f}$ and $P_{f_0}$ denote two probability densities belonging to an Exponential family distribution of the form 
\begin{equation}
    P_f(\Y \C \X)=h(\Y)g\left[f(\X)\right]\exp \left [\eta \left (f(\X) \right)^T T(\Y) \right],
\end{equation}
with parameters $f$ and $f_0$ respectively. If $\abs{\frac{\nabla^T g (\b)}{g(\b)}} \le C^n_{g} \textbf{1}_p$, for some positive sequence $\{C^n_{g}\}_{n \ge 1}$, then
\begin{align}\label{eq:KL_bound}
& K_n(P_{f_0}, P_f) \vee V_n(P_{f_0}, P_f)  \lesssim C^n_g \sum_{k=1}^K \norm{\beta_k-\beta^0_k}_1
\end{align}
\begin{align}\label{eq:Hellinger_bound}
& H_n^2(P_{f_0}, P_f)  \lesssim C^n_g \sum_{k=1}^K \norm{\beta_k-\beta^0_k}_1
\end{align}
\end{lemma}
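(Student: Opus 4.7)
The plan is to exploit the exponential family structure of $P_f$ and $P_{f_0}$ to derive closed-form expressions for each of the three divergences in terms of $g$ and $\log g$, and then to convert these into first-order estimates via the mean value theorem together with the hypothesis $\abs{\nabla g/g}\le C_g^n\mathbf{1}_p$ on $B_n$. Because $f$ and $f_0$ share the same partition $\{\Omega_k\}_{k=1}^K$, every sample $\X_i$ lies in a single cell $\Omega_{k(i)}$, so each pointwise difference $\norm{f(\X_i)-f_0(\X_i)}_1$ collapses to $\norm{\beta_{k(i)}-\beta^0_{k(i)}}_1$, and averaging over $i$ assembles these into the advertised sum over $k$.

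For the KL divergence, I would first write out the log density ratio. Since $h(\Y)$ cancels and Assumption 1 enforces $\eta(z)=z$,
\begin{equation*}
\log\frac{P_f(\Y \C \X)}{P_{f_0}(\Y \C \X)} = \log\frac{g(f(\X))}{g(f_0(\X))} + (f(\X)-f_0(\X))^T T(\Y).
\end{equation*}
Taking expectation under $P_f$ and invoking the exponential family moment identity $\E_{P_f}[T(\Y)\mid\X] = -\nabla\log g(f(\X))$ gives
\begin{equation*}
K(P_f,P_{f_0}\mid\X) = \log g(f(\X)) - \log g(f_0(\X)) - (f(\X)-f_0(\X))^T \nabla\log g(f(\X)).
\end{equation*}
Applying the mean value theorem to the first difference together with \eqref{eq:g_condition} on both terms produces the pointwise bound $K \le 2\,C_g^n\norm{f(\X)-f_0(\X)}_1$.

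For $V$, I would decompose $\log(P_f/P_{f_0}) = A(\X) + B(\X,\Y)$, where $A$ is deterministic and $B = (f-f_0)^T T(\Y)$; the squared expectation then splits into $A^2 + 2A\,\E_{P_f}[B] + \E_{P_f}[B^2]$, each piece of which I would bound by repeated use of the gradient hypothesis. For the Hellinger distance, direct computation of the Bhattacharyya coefficient yields $\int\sqrt{P_f P_{f_0}}\,d\mu = \sqrt{g(f)g(f_0)}/g((f+f_0)/2)$, so $H^2 = 2(1-\rho)$ with $-\log\rho = \phi(1/2) - (\phi(0)+\phi(1))/2$ for $\phi(t) = \log g((1-t)f_0 + t f)$. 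Two applications of the mean value theorem to $\phi$ combined with $\abs{\phi'(t)} \le C_g^n\norm{f-f_0}_1$ bound $H^2$ by a constant multiple of $C_g^n\norm{f-f_0}_1$, matching the order of the KL bound.

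The final step is aggregation: because $f$ and $f_0$ are constant on each $\Omega_k$, averaging the pointwise bounds over $i = 1,\dots,n$ yields
\begin{equation*}
K_n,\ V_n,\ H_n^2 \ \lesssim\ C_g^n \sum_{k=1}^K \mu(\Omega_k)\,\norm{\beta_k-\beta^0_k}_1 \ \le\ C_g^n \sum_{k=1}^K \norm{\beta_k-\beta^0_k}_1,
\end{equation*}
which is the content of \eqref{eq:KL_bound} and \eqref{eq:Hellinger_bound}. The main technical obstacle I anticipate is the variation bound, since \eqref{eq:g_condition} directly controls only $\nabla\log g$ and not $\mathrm{Var}_{P_f}[T(\Y)] = -\nabla^2\log g$; handling this will likely require either a separate Hessian bound on $B_n$ inherited from the polynomial-growth assumption $C_g^n\vee C_\beta^n \lesssim n^M$, or a sup-norm estimate of $\log(P_f/P_{f_0})$ over $B_n$ that converts the KL bound into a $V$ bound at a cost that is absorbed into the $\lesssim$ constants.
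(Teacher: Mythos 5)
Your treatment of $K_n$ is essentially the paper's own argument: cancel $h(\Y)$, use the exponential-family identity $\E_{P_f}[T(\Y)\mid\X]=-\nabla\log g(f(\X))$, and apply the mean value theorem together with the bound on $\nabla g/g$, then aggregate over cells using that $f,f_0$ share the partition. For the Hellinger part you take a mildly different but self-contained route: the paper simply invokes $H^2\le K$, while you compute the Bhattacharyya coefficient $\rho=\sqrt{g(f)g(f_0)}/g((f+f_0)/2)$ and bound $-\log\rho$ by two mean-value estimates on $\phi(t)=\log g((1-t)f_0+tf)$; both give the same pointwise bound (and both, like the paper, silently pass from a bound on the average \emph{squared} Hellinger to the stated bound on $H_n$), so this difference is cosmetic rather than substantive.

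The genuine issue is the $V_n$ bound, which your proposal does not prove, and you are right to flag why: writing $\log(P_f/P_{f_0})=A+(f-f_0)^TT(\Y)$, the term $\E_{P_f}\bigl[\bigl((f-f_0)^TT(\Y)\bigr)^2\bigr]$ equals $(f-f_0)^T\bigl(\mathrm{Var}_{P_f}(T)+\E[T]\E[T]^T\bigr)(f-f_0)$, and $\mathrm{Var}_{P_f}(T)=-\nabla^2\log g(f)$ is second-order information that the hypothesis $\abs{\nabla g/g}\le C_g^n\mathbf{1}_p$ simply does not control; a uniform sup-norm bound on the log-ratio is also unavailable since $T(\Y)$ is unbounded for the main examples (Gaussian, Poisson). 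So as it stands your argument establishes \eqref{eq:KL_bound} only for the $K_n$ component and establishes \eqref{eq:Hellinger_bound}, but the $V_n$ claim needs an additional ingredient (e.g.\ a bound on $\nabla^2\log g$, equivalently on $\mathrm{Var}(T)$, over the relevant range of step heights, which does hold in the concrete models of Section \ref{sec: implications}). You should be aware that the paper's own proof is no better here: it disposes of $V_n$ with the sentence ``similar technique works,'' which elides exactly the same difficulty, so your proposal reproduces the paper's argument where the paper gives one and honestly exposes the step the paper leaves unjustified; to submit this as a complete proof you would have to add the second-order hypothesis (or verify it case by case) rather than leave it as an anticipated obstacle.
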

\begin{proof}
Denoting $f_i=f(\X_i)$ and $f_{i0}=f_0(\X_i)$, and recalling that under Assumption~1 we have $\eta(z)=z$, we bound $K_n(P_{f_0}, P_f)$ as follows. We write
\begin{align*}
K_n(P_{f_0},P_f)
&= \frac{1}{n}\sum_{i=1}^n \int P_{f_0}(\Y \C \X_i)\log\frac{P_{f_0}(\Y \C \X_i)}{P_f(\Y \C \X_i)}\,\d\Y \\
&= \frac{1}{n}\sum_{i=1}^n \int h(\Y)g(f_{i0})\exp\!\left[f_{i0}^T T(\Y)\right]
   \!\left[\log\frac{g(f_{i0})}{g(f_i)} + (f_{i0}-f_i)^T T(\Y)\right]\d\Y \\
&= \frac{1}{n}\sum_{i=1}^n \!\left[\log\frac{g(f_{i0})}{g(f_i)}
   + (f_{i0}-f_i)^T \mathbb{E}_{P_{f_{i0}}}\!\left[T(\Y)\right]\right].
\end{align*}
For the exponential family \eqref{eq:exponential_pdf} with $\eta(z)=z$, we can show 
$\mathbb{E}_{P_f}[T(\Y)] = -\nabla \log g(f) = -\nabla g(f)/g(f)$. Substituting this to the above equation gives:
\begin{align*}
K_n(P_{f_0},P_f)
&= \sum_{k=1}^K \mu(\Omega_k)
   \!\left[\log\frac{g(\beta^0_k)}{g(\beta_k)}
   - \frac{\nabla^T g(\beta^0_k)}{g(\beta^0_k)}(\beta^0_k-\beta_k)\right].
\end{align*}
Performing a first-order Taylor expansion of $\log g(\beta^0_k)$ around $\beta_k$ gives $\log g(\beta^0_k) - \log g(\beta_k) \approx \frac{\nabla^T g(\beta_k)}{g(\beta_k)}(\beta^0_k - \beta_k)$. Combining with the triangle inequality:
\begin{align*}
K_n(P_{f_0},P_f) \lesssim \sup_\beta \abs{\frac{\nabla^T g(\b)}{g(\b)}}\sum_{k=1}^K\norm{\beta_k-\beta^0_k}_1
 = C^n_g \sum_{k=1}^K\norm{\beta_k-\beta^0_k}_1.
\end{align*}
The same argument gives an identical bound for $V_n(P_{f_0},P_f)$.

Since the squared Hellinger distance satisfies $H^2(P,Q) \le K(P\|Q)$ (see e.g. Lemma 2.4 of \cite{tsybakov2009introduction}), we obtain
\begin{equation*}
H_n^2(P_{f_0}, P_f) \lesssim K_n(P_{f_0},P_f) \lesssim C^n_g \sum_{k=1}^K \norm{\beta_k-\beta^0_k}_1.
\end{equation*}
\end{proof}

\paragraph{Proof of Lemma \ref{lemma:monotone_approx}}
\begin{proof}
Without loss of generality, assume $0 \le f_0(\cdot) \le 1$ and that $f_0$ is 
coordinatewise non-decreasing (replacing $x_j$ with $1-x_j$ for any 
decreasing coordinate). Partition the range $[0,1]$ by 
$0=y_0 < y_1 < \dots < y_K=1$ with $K=\lceil 1/\varepsilon_n \rceil$, 
so $|y_k - y_{k-1}| < \varepsilon_n$. Define the axis-aligned boxes
$$
B_k = \prod_{j=1}^q \left[\inf\{x_j : f_0(\x) \in [y_{k-1},y_k)\},\; 
                          \sup\{x_j : f_0(\x) \in [y_{k-1},y_k)\}\right],
$$
and the step function $f(\X) = \sum_{k=1}^K y_k \mathbb{I}\{\X \in B_k\}$. 
Since $f_0$ is coordinatewise non-decreasing, any $\x$ with 
$f_0(\x) \in [y_{k-1}, y_k)$ satisfies $\x \in B_k$ by construction, 
and hence $|f_0(\X) - f(\X)| \le \varepsilon_n$ for all $\X$. The 
$\{B_k\}$ are axis-aligned boxes, so the partition can be refined into 
a k-d tree partition $\{\wh\Omega_k\}_{k=1}^{\wh K}$ with 
$\wh K \ge K = \lceil 1/\varepsilon_n \rceil$ leaves.
\end{proof}

\section{Proof of Main Results}\label{sec:result_proof}
In this section we prove Theorem 4.1 and Theorem 4.3. Most steps in the proofs are identical and hence for simplicity we describe the common steps of the proofs together and mark the steps that are different by the corresponding theorem number. We need to prove three conditions: entropy condition (C1), prior concentration condition (C2) and prior decay rate condition (C3). The steps of the proofs for each of these conditions are described below.

\subsection{Entropy Condition}\label{sub:entropy_proof}
Define the sieve $\mF_n \subset \mF$ as follows,
\begin{equation*}
    \mF_n = \{f_{\mT,{\b}}(\X) \text{, with $\mT$ being any tree partition with number of leaves} K \le k_n \text{ and } \norm{\beta}_\infty \le C^n_\beta\},
\end{equation*}
where $k_n \propto n \varepsilon_n^2 / \log n$ and $C^n_\beta$ is defined in Assumption 1.

Since $\norm{\bm{z}}_1 \le Kp\norm{\bm{z}}_\infty$ for any $\bm{z} \in \mathbb{R}^{Kp}$, by the bound \eqref{eq:Hellinger_bound} and by definition of $\mF_n$, we can write
\begin{align*}
 N \left(\frac{\varepsilon}{36}, \mF_n \cap \mathcal{A}_\varepsilon, H_n\right)
 & \lesssim \sum_{K=1}^{k_n} \Delta(V_K) \cdot N \left(\frac{\varepsilon^2}{36C^n_g Kp},
 \{\beta: \norm{\beta}_\infty \le C^n_\beta\}, \norm{\cdot}_\infty \right) \\
 & \lesssim \sum_{K=1}^{k_n} (qn)^K \cdot 
 \left (\frac{36 C_\beta^n C_g^n Kq}{\varepsilon^2} \right )^{Kq}
\end{align*}
Therefore the logarithm of the LHS of (C1) can be bounded from above by
\begin{align*}
 (k_n+1)p \left [\log 36 + \log (C_\beta^n C_g^n) + \log k_n + \log (pq) - 2\log \varepsilon_n \right ]
\end{align*}
Since $C_\beta^n C_g^n \lesssim n^M$ for some $M>0$, ignoring smaller terms, proving condition (C1) reduces to proving
\begin{align}\label{eq:entropy_final_exponential}
 (k_n+1)p \log n \lesssim n\varepsilon_n^2
\end{align}

\paragraph{Theorem 4.1:} When $f_0$ is a step function with complexity $K_{f_0}$ we can prove \eqref{eq:entropy_final_exponential} by replacing $\varepsilon_n= n^{-1/2}\sqrt{K_{f_0}\log^{2\eta}(n/ K_{f_0})}$ and $k_n \propto \frac{n\varepsilon_n^2}{p \log (n/  K_{f_0})}= K_{f_0}\log^{2\theta-1}(n/K_{f_0})$ for some $\theta > 1/2$.

\paragraph{Theorem 4.3:} When $f_{0l}$ is a $\nu$-H\"{o}lder continuous function with $0 < \nu \le 1$ for all $l=1,\dots,p$, replacing $\varepsilon_n= n^{-\nu/(2\nu+q)}\sqrt{\log n}$ and $k_n \propto \frac{n\varepsilon_n^2}{\log n}= n^{q/(2\nu+q)}$ proves \eqref{eq:entropy_final_exponential}.

\subsection{Prior Concentration Condition} \label{sub:concentration_proof}
Let $\wt{f}_0=\left (f_{\mT,{\B_1^0}}(\x), \dots, f_{\mT,{\B_{q-1}^0}}(\x) \right )$ denote the projection of $f_0$ onto a balanced k-d tree partition with $a_n$ leaves, where $a_n$ is chosen so that $\norm{f_0-\wt{f}_0}_{2,n} < \varepsilon_n/2$. 

A k-d tree partition is a balanced full binary tree that partitions $[0,1]^q$ into nearly equal-sized rectangular cells by cycling over coordinate directions $\{1,\dots,q\}$ and splitting each internal node at the median of the observations along the current axis, so that after $s$ rounds of splits all $K=2^{qs}$ terminal nodes contain approximately $n/K$ observations \citep{rockova2019theory}.

\paragraph{Theorem 4.1:} If $f_0$ is a step function, $a_n=K_{f_0}$

\paragraph{Theorem 4.3:} If $f_0$ is a $\nu$-H\"{o}lder continuous function, $a_n$ is chosen by the following lemma, which is analogous to Lemma 3.2 of \cite{rockova2020posterior}.

\begin{lemma}
Denote $f=\{f_l\}_{l=1}^{p}$ and assume $f_l \in \mathcal{H}^{\nu_l}$ where $\nu_l \le 1$ for all $l=1,\dots,p$ and $\mathcal{X}$ is regular. Then there exists tree structured step functions $\hat{f}=\{f_{\mT,\B_l}\}_{l=1}^{p} \in \mF_K$ for some given tree partition $\mT$ with $K \in \mathbb{N}$ leaves such that for some constant $C > 0$,
$$
\norm{\hat{f}-f}_{2,n} \le C q \sum_{l=1}^{p} \left ( \frac{1}{K^{\nu_l/q}}  \norm{f_l}_{\mathcal{H}^{\nu_l}}\right) \le C \frac{q}{K^{\nu/q}}  \sum_{l=1}^{p} \left ( \norm{f_l}_{\mathcal{H}^{\nu_l}}\right),
$$
where $\nu=\min_{l=1}^{p}\nu_l$.
\end{lemma}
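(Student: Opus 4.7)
The plan is to construct a single balanced k-d tree partition that simultaneously serves all $p$ output coordinates, approximate each $f_l$ by a piecewise constant function on this partition via a representative value, and control the cell-wise error through H\"older continuity. First, I would build a balanced k-d tree partition $\mT=\{\Omega_k\}_{k=1}^{K}$ of $[0,1]^q$ with $K=2^{qs}$ leaves by cyclically splitting along the coordinate axes at the median of the available data in each internal node, so that every coordinate is cut $s=(\log_2 K)/q$ times. Under Assumption 2 (design regularity), this partition admits a uniform diameter control of the form $\text{diam}(\Omega_k)\lesssim \sqrt{q}\,K^{-1/q}$, since along each direction the cell extent is reduced to order $2^{-s}=K^{-1/q}$.

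Next, for each coordinate $l=1,\dots,p$, I would choose step heights $\beta_{l,k}=f_l(\x_k)$ at any representative point $\x_k\in\Omega_k$ and set $f_{\mT,\B_l}(\x)=\sum_{k=1}^K \beta_{l,k}\,\mathbb{I}\{\x\in\Omega_k\}$. By $\nu_l$-H\"older continuity, for any $\X_i\in\Omega_k$ we have $|f_l(\X_i)-\beta_{l,k}|\le \norm{f_l}_{\mathcal{H}^{\nu_l}}\,\text{diam}(\Omega_k)^{\nu_l}$. Grouping the empirical average by cells and using $\sum_k \mu(\Omega_k)=1$ yields
\[
\norm{f_{\mT,\B_l}-f_l}_{2,n}^2 \le \norm{f_l}_{\mathcal{H}^{\nu_l}}^{2}\sum_{k=1}^K \mu(\Omega_k)\,\text{diam}(\Omega_k)^{2\nu_l} \le \norm{f_l}_{\mathcal{H}^{\nu_l}}^{2}\,\max_k \text{diam}(\Omega_k)^{2\nu_l}.
\]
Plugging in the diameter bound gives $\norm{f_{\mT,\B_l}-f_l}_{2,n}\lesssim q^{\nu_l/2}\,\norm{f_l}_{\mathcal{H}^{\nu_l}}\,K^{-\nu_l/q}$.

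To pass from coordinatewise bounds to the joint approximation error, I would apply the elementary inequality $\sqrt{a_1^2+\dots+a_p^2}\le |a_1|+\dots+|a_p|$ pointwise in $\X_i$ before averaging, so that
\[
\norm{\hat f-f}_{2,n}\le \sum_{l=1}^{p}\norm{f_{\mT,\B_l}-f_l}_{2,n}\lesssim C\,d\sum_{l=1}^{p}\frac{\norm{f_l}_{\mathcal{H}^{\nu_l}}}{K^{\nu_l/q}},
\]
with $d$ absorbing $q^{\nu_l/2}\le\sqrt{q}$. The coarser second inequality in the lemma then follows by replacing $K^{-\nu_l/q}$ with $K^{-\nu/q}$ (valid since $\nu=\min_l \nu_l$) and slackening $C\,d \lesssim C\,q$.

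The main obstacle I foresee is the clean derivation of the uniform diameter bound $\text{diam}(\Omega_k)\lesssim \sqrt{q}\,K^{-1/q}$ from Assumption 2. That assumption compares the maximum diameter to a weighted average of diameters rather than furnishing a direct upper bound; it serves as a design-regularity hypothesis ruling out degenerate datasets in which cyclic median splitting produces pathologically elongated cells. Some care is needed to verify that after cycling through all $q$ coordinates $s$ times, every cell has extent of order $K^{-1/q}$ in every direction (not just those split most recently), so that the Euclidean diameter obeys the bound used above. Everything else reduces to routine H\"older continuity and triangle-inequality estimates.
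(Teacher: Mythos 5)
Your overall route is the same one the paper implicitly relies on (the paper does not prove this lemma itself; it defers to Lemma 3.2 of Ro\v{c}kov\'a and van der Pas): build a balanced k-d tree partition, approximate each coordinate $f_l$ by a cell-wise representative value, use H\"older continuity to control the within-cell error, and sum over the $p$ coordinates by the triangle inequality. Those parts of your argument are fine. The genuine gap is exactly the step you flag and then assert anyway: the uniform diameter bound $\mathrm{diam}(\Omega_k)\lesssim \sqrt{q}\,K^{-1/q}$. Your justification --- ``along each direction the cell extent is reduced to order $2^{-s}$'' --- is false for k-d (median/quantile) splits: a median split halves the number of \emph{data points} in a cell, not its geometric length. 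If the design is clustered, a cell can remain of extent $O(1)$ in some direction after many rounds of splitting, so no uniform diameter bound follows from the construction alone, and Assumption 2 by itself does not supply one either, since it only compares the maximum diameter to a weighted average of diameters.

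The correct repair is a two-step argument. First, bound the $\mu$-weighted average diameter directly from balancedness: for a balanced k-d tree with $K=2^{qs}$ leaves one has $\mu(\Omega_k)\asymp 2^{-qs}$, and for each fixed direction $j$ the cells sharing a ``column'' (identical position in the other $q-1$ directions) have extents in direction $j$ summing to at most $1$, so $\sum_{k}\bigl|\Omega_k^{(j)}\bigr|\le 2^{(q-1)s}$ and hence $\sum_k \mu(\Omega_k)\,\mathrm{diam}(\Omega_k)\le \sum_j \sum_k \mu(\Omega_k)\bigl|\Omega_k^{(j)}\bigr|\lesssim q\,2^{-s}= q\,K^{-1/q}$. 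Second, invoke Assumption 2 to transfer this to the maximum: $\max_k \mathrm{diam}(\Omega_k)\le M\sum_k\mu(\Omega_k)\,\mathrm{diam}(\Omega_k)\lesssim q\,K^{-1/q}$, which you then insert into your cell-wise H\"older bound (using $\nu_l\le 1$ to absorb the dimension factor into $C d$, respectively $Cq$). Equivalently, you can avoid the maximum altogether for $\nu_l=1$, but for general $\nu_l<1$ the weighted average of $\mathrm{diam}^{2\nu_l}$ is most easily controlled through the maximum, so Assumption 2 is genuinely needed at this point. With this replacement of your heuristic diameter claim, the rest of your proof (representative-point step heights, cell-wise grouping of the empirical norm, triangle inequality over $l$, and the coarsening $K^{-\nu_l/q}\le K^{-\nu/q}$) goes through; you should also note that for a general $K$ that is not of the form $2^{qs}$ one takes the largest $2^{qs}\le K$, at the cost of a constant factor.
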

As a corollary, replacing $C_0=C \left (\sum_{l=1}^{p} \norm{f_l}_{\mathcal{H}^\nu}\right)$, $a_n$ satisfies
\begin{equation}\label{eq:a_n_bound}
\left (\frac{2C_0 q}{\varepsilon_n} \right)^{q/\nu} \le a_n \le \left (\frac{2C_0 q}{\varepsilon_n} \right)^{q/\nu} + 1
\end{equation}
Using \eqref{eq:KL_bound} and by triangle inequality, we can bound the LHS of (C2) from below by
\begin{align*}
    C \pi(a_n) \Pi \left (\beta \in B_n^{a_n} : \norm{\beta-\beta^0}_1 \le \frac{\epsilon_n^2}{2C_g^n} \right)
\end{align*}
For the prior by \cite{chipman2010bart}, $C=1$ and $\pi(a_n) \gtrsim e^{-a_n \log a_n}$ (by Corollary 5.2 of \cite{rockova2019theory}).

For the prior by \cite{denison1998bayesian},
$C=\frac{1}{\C F_{a_n}\C} > (a_n d n)^{-a_n} > e^{-a_n \log a_n}$  (by Lemma 3.1 of \cite{rockova2020posterior}) and $\pi(a_n) \gtrsim e^{-a_n \log a_n}$ (by proof of Theorem 4.1 of \cite{rockova2020posterior}).

Thus for both priors $C\pi(a_n) \gtrsim e^{-2a_n\log a_n}$.

Next we bound $\Pi \left (\beta \in B_n^{a_n}: \norm{\b-\b^0}_1 \le \frac{\epsilon_n^2}{2C_g^n} \right)$, up to a constant, from below by
\begin{align*}
 \Pi \left (\b: \norm{\b}_\infty \le C_\beta^n, \quad \norm{\b-\b^0}_\infty \le \frac{\epsilon_n^2}{2a_n q C_g^n} \right)
\end{align*}
Since $C^n_g$ and $C^n_\beta$ both are increasing with $n$, for sufficiently large $n$, the above expression is bounded below by
\begin{align*}
& \Pi \left (\b: \norm{\b-\b^0}_\infty \le \frac{\varepsilon_n^2}{2a_n p C_g^n} \right) \\
\gtrsim & e^{- C_1 a_n p \left (\norm{\beta_0}_\infty+ \frac{\varepsilon_n^2}{2a_n p C_g^n} \right )^{C_2}}\left ( {\norm{\beta_0}_\infty +  \frac{\varepsilon_n^2}{2a_n p C_g^n}} \right)^{a_n p}
\end{align*}
Since $\varepsilon_n^2 \rightarrow 0$ and both $a_n$ and $C_g^n$ are both increasing with $n$, assuming $\norm{f_0}_\infty \lesssim \sqrt{\log n}$, the above bound reduces to $e^{- C_1 a_n p \log^{C_2/2} n + \frac{a_n p}{2}\log\log n}$.

We can prove $e^{-a_n \log n} \gtrsim e^{-n\varepsilon_n^2}$ for Theorem 4.1 and Theorem 4.3 separately by replacing appropriate values of $\varepsilon_n$.  Since $C_2 \le 2$, this would complete the proof. 
\subsection{Prior Decay Rate Condition} \label{sub:decay_proof}

\paragraph{Theorem 4.1:} When $f_0$ is a step-function with complexity $K_{f_0}$,
\begin{align*}
\Pi(\mF \setminus \mF_n)  & \le \Pi(\mF \setminus \bigcup_{K=1}^{k_n} F_K)  + \Pi(\bigcup_{K \le k_n} \{f \in F_K: \norm{\beta}_\infty > C_\beta^n \}) \\
& \le \Pi(\bigcup_{K>k_n}F_K)  + e^{-K_{f_0}\log n/2}\\
  & = \Pi(\bigcup_{K>k_n}F_K)  + o(e^{-n \varepsilon_n^2})
\end{align*}
The last line is due to the fact $C_\beta^n \gtrsim K_{f_0}\log n$ when $f_0$ is a step-function with complexity $K_{f_0}$.

\paragraph{Theorem 4.3:} When $f_0$ is a $\nu$-H\"{o}lder continuous function, the LHS of condition (C3) can be bounded from above by $\Pi(\bigcup_{K>k_n}F_K)  + o(e^{-n \varepsilon_n^2})$, as follows:

\begin{align*}
  \Pi(\mF \setminus \mF_n)  & \le \Pi(\mF \setminus \bigcup_{K=1}^{k_n} F_K)  + \Pi(\bigcup_{K \le k_n} \{f \in F_K: \norm{\beta}_\infty > C_\beta^n \}) \\
  & \le \Pi(\bigcup_{K>k_n}F_K)  + \sum_{K=1}^{k_n}\Pi(\{\beta: \norm{\beta}_\infty > C_\beta^n\}) \\
  \end{align*}
  By condition \eqref{eq:beta_tail2}, the right hand side can be bounded by
  \begin{align*}
  \Pi(\bigcup_{K>k_n}F_K)  + \sum_{K=1}^{k_n} e^{-C^n_\beta}
  \le \Pi(\bigcup_{K>k_n}F_K)  + k_n e^{-C^n_\beta}\\
  \end{align*}
Since $C_\beta^n \gtrsim n$, when $f_0$ is a $\nu$-H\"{o}lder continuous functions, we have
$k_n e^{-C^n_\beta} = o(e^{-n \varepsilon_n^2})$.
Thus, it is enough to show that
\begin{equation*}
\Pi(\bigcup_{K>k_n}F_K) \lesssim e^{-n\varepsilon_n^2}
\end{equation*}
This condition is satisfied for both the priors under consideration. This follows from section 8.3 of \cite{rockova2020posterior} for the prior by \cite{denison1998bayesian} and from Corollary 5.2 of \cite{rockova2019theory} for the prior by \cite{chipman2010bart}.

\section{Classification with Dirichlet Step Heights}\label{sec:result_dirichlet}
For a multi-class classification problem with $p$ classes, where the response variable $\Y$ is a categorical random variable with $p$ categories, $\Y$ can be written as a $p$ dimensional binary vector that has $1$ at the $l$-th coordinate if $\Y$ belongs to category $l \in \{1,\dots,p\}$ and $0$ elsewhere. G-BART assumes
\begin{equation}
    \Y \C \X \sim \text{Multinomial}(1, \bm{f_0}(\X)),
\end{equation}
where $\bm{f_0} = \left (f_{01},\dots,f_{0p} \right )':\mathbb{R}^q \rightarrow (0,1)^p $ is a constrained function with $\bm{f_0}(\X)'\textbf{1}_p = 1$ for any $\X \in \mathbb{R}^q$. Each $f_{0l}(\cdot)$ can be approximated by a step function of the form
\begin{equation}\label{eq:tree_mapping_dirichlet}
f_{\mT,{P}}(\x)=\sum_{k=1}^K P_k\mathbb{I}(\x\in\Omega_k)
\end{equation}
on a tree-partition $\{\Omega_k\}_{k=1}^K$. \cite{denison1998bayesian} assumes
\begin{equation}\label{eq:dirichlet_dist}
    P_k = \left (P_{k1},\dots,P_{kp} \right ) \stackrel{i.i.d}{\sim} \text{Dirichlet}(\alpha_1,\dots,\alpha_p),
\end{equation}
where $\alpha_l > 0, \quad \forall l \in \{1,\dots,p\}$.

\begin{theorem}\label{thm:dirichlet_tree}
 If we assume that the distribution of the step-sizes satisfies \eqref{eq:dirichlet_dist}, then under Assumptions 1 \& 2 described in section 4 of the manuscript, the Bayesian Tree estimator satisfies the following property,: 
 \begin{enumerate}
 \item[(i)] If $f_0$ is $\nu$-H\"{o}lder continuous with  {$0<\nu\leq 1$} where $\|f_0\|_\infty\lesssim \log^{1/2} n$, then with $\varepsilon_n=n^{-\alpha/(2\alpha+p)}\log^{1/2} n$, and
 \item[(ii)] If $f_0$ is step-function with complexity $K_{f_0} \lesssim \sqrt{n}$, 
 
 then with $\varepsilon_n=n^{-1/2}\sqrt{K_{f_0}p \log^{2\nu}{\left(n/K_{f_0}p\right)}} n$, 
 \end{enumerate}
\begin{equation*}
\Pi\left( f \in \mathcal{F}: H_n(\P_f,\P_{f_0}) > M_n\,\varepsilon_n \mid \Y^{(n)}\right) \to 0,
\end{equation*}
for any $M_n \to \infty$  in $\P_{f_0}^{(n)}$-probability, as $n,p \to \infty$. 
 
The above statement is true for both tree priors considered in this paper: the prior by \cite{denison1998bayesian} and a modified version of the prior by \cite{chipman1998bayesian} with $p_{split}(\Omega_t)=\alpha^{d(\Omega_t)}$ for some $1/n\leq \alpha<1/2$.
\end{theorem}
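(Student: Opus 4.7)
The plan is to verify the three Ghosal-van der Vaart conditions (C1)-(C3) by following the template of Section~\ref{sec:result_proof} used in the proofs of Theorems~\ref{theorem: step} and~\ref{theorem: smooth}, with the prior-concentration step requiring the main substantive modification because the small-ball condition \eqref{eq:beta_tail1} fails for the Dirichlet prior. The other two conditions in fact become easier, not harder, because Dirichlet draws are automatically bounded in the simplex $\Delta^{p-1}$.

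Before starting, I observe that Assumption~1 holds trivially for the multinomial model under the canonical/softmax parameterization: the function $g$ is identically one, so \eqref{eq:g_condition} holds with any positive sequence $C^n_g$. Moreover, since each $P_k \in \Delta^{p-1}$ automatically satisfies $\norm{P_k}_\infty \le 1$, the truncation $\{\norm{\beta}_\infty \le C^n_\beta\}$ in the sieve $\mF_n$ of Section~\ref{sub:entropy_proof} is vacuous. Condition (C1) then follows from the same covering-number calculation as in Section~\ref{sub:entropy_proof}, with the choices of $k_n$ for the step and H\"older cases reused verbatim. Condition (C3) similarly simplifies: the truncation-tail term in Section~\ref{sub:decay_proof} vanishes, and the requirement reduces to $\Pi(\bigcup_{K>k_n} F_K) \lesssim e^{-n\varepsilon_n^2}$, which is already verified for both tree priors in Section~\ref{sub:decay_proof}.

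The main work is (C2). As in Section~\ref{sub:concentration_proof}, I would project $f_0$ coordinatewise onto a balanced k-d tree partition with $a_n$ leaves (with $a_n = K_{f_0}$ in the step case and $a_n$ as in \eqref{eq:a_n_bound} in the H\"older case) to obtain $\wt{f}_0$ with $\norm{f_0 - \wt{f}_0}_{2,n} \le \varepsilon_n/2$. Since $g \equiv 1$, the bound \eqref{eq:KL_bound} reduces the prior-mass condition to lower-bounding
\begin{equation*}
    \pi(a_n)\cdot \prod_{k=1}^{a_n} \Pi\!\left(\norm{P_k - P_k^0}_1 \le \eta_n\right),\qquad \eta_n \propto \varepsilon_n^2/a_n,
\end{equation*}
where $P_k^0$ is the value of $\wt{f}_0$ on the $k$-th cell. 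The tree-prior factor $\pi(a_n)\gtrsim e^{-a_n\log a_n}$ carries over unchanged from Section~\ref{sub:concentration_proof} for both priors, so the argument reduces to a small-ball estimate for the Dirichlet factor.

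The hard part is this Dirichlet small-ball bound, which is what replaces the argument that previously used \eqref{eq:beta_tail1}. I would use the explicit density $\rho(P) \propto \prod_l P_l^{\alpha_l - 1}$ on $\Delta^{p-1}$: because every $\alpha_l > 0$, this density is Lebesgue integrable on the simplex and on any compact subset of its interior is bounded below by a positive constant. Standard integration over an $\ell_1$-ball of radius $\eta_n$ in $\Delta^{p-1}$, whose $(p-1)$-dimensional volume is of order $\eta_n^{p-1}$, yields $\Pi(\norm{P_k - P_k^0}_1 \le \eta_n) \gtrsim \eta_n^{p-1}$ with a constant depending only on $\{\alpha_l\}$; the case in which some coordinate of $P_k^0$ approaches a face $\{P_l = 0\}$ is handled directly by the integrable form of $\rho$ there. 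Multiplying over the $a_n$ cells and taking logarithms gives a lower bound of order $e^{-C a_n \log a_n - C(p-1)a_n \log(1/\eta_n)}$, which reduces to $\gtrsim e^{-d\,n\,\varepsilon_n^2}$ for the rates $\varepsilon_n$ stated in the theorem, exactly as at the end of Section~\ref{sub:concentration_proof}. Combining the three verified conditions and invoking Theorem~4 of \cite{ghosal2007convergence} then completes the argument.
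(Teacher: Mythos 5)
Your overall architecture is the same as the paper's: (C1) and (C3) are inherited from the general proof (with the simplex making the truncation vacuous), and the work is the prior-mass condition, handled by projecting $f_0$ onto a balanced k-d tree with $a_n$ leaves, using $\pi(a_n)\gtrsim e^{-a_n\log a_n}$ for both tree priors, and multiplying a per-cell Dirichlet small-ball probability of order $(\varepsilon_n^2/(a_n p))^{\,c\,a_n p}$. However, there is a genuine gap in how you treat the boundary of the simplex, and it shows up in two places. First, the reduction of $K_n\vee V_n$ to an $\ell_1$-ball around $P^0$ is not a free consequence of ``$g\equiv 1$'': under the softmax/natural parameterization the step heights are logits, whereas the Dirichlet prior sits on the probabilities $P_k$, and in the probability parameterization the multinomial log-likelihood ratio involves $\log(P_{kl}/P^0_{kl})$, whose Lipschitz control in $\norm{P_k-P^0_k}_1$ degenerates as coordinates approach $0$. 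The paper handles this by explicitly assuming $\min_l f_{0l}>\delta_0$, so that $P^0_{kl}>\delta_0$ on every cell, and the factor $\delta_0$ is carried into the radius $\delta_0\varepsilon_n^2/2$; your proposal omits this assumption and asserts the reduction with radius $\varepsilon_n^2/a_n$ regardless of where $P^0_k$ sits.

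Second, your claimed small-ball bound $\Pi(\norm{P_k-P^0_k}_1\le\eta_n)\gtrsim \eta_n^{p-1}$ ``with a constant depending only on $\{\alpha_l\}$,'' with the face case ``handled directly by the integrable form of $\rho$,'' is not correct uniformly over the simplex. Integrability only helps for upper bounds; for a lower bound you need the density bounded below on the ball, and when some $\alpha_l>1$ the Dirichlet density vanishes at the face $\{P_l=0\}$. The paper's own motivating example is $\text{Beta}(2,2)$: if $P^0$ is within $\eta_n$ of $0$, the ball probability is of order $\eta_n^{2}$, not $\eta_n$. (This particular degradation only changes constants in the exponent and so would not by itself destroy the rate, but it is not what you wrote, and the KL-side problem above genuinely does require keeping $P^0$ away from the faces.) The fix is exactly the paper's: add the assumption that the true class probabilities are uniformly bounded below by some $\delta_0>0$, restrict the small-ball to the region where all coordinates exceed $\delta_0/2$ (where the Dirichlet density is bounded below by a constant depending on $\alpha$ and $\delta_0$), and carry $\delta_0$ through both the KL reduction and the per-cell bound, after which your concluding rate calculation goes through as stated.
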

\begin{proof}
We need to prove three conditions: entropy condition (C1), prior concentration condition (C2) and prior decay rate condition (C3). Among these (C1) and (C3) can be proved by the same technique as in section \ref{sec:result_proof}. Therefore we will only prove Condition (C3) here. We need to show, for some $c>0$
\begin{equation}\label{dirichlet_prior_bound}
\Pi \left (f \in \mathcal{F}: max \{K_n(f,f_0),V_n(f,f_0) \} \le \varepsilon_n^2 \right) \gtrsim e^{-c n \varepsilon_n^2}
\end{equation}
Let $\wt{f}_0 = \left (f_{\mT,{P_1^0}}(\x), \dots, f_{\mT,{P_q^0}}(\x) \right )$ denote the projection of $f_0$ onto a balanced k-d tree partition $\mT$ with $a_n$ leaves, where $a_n$ is chosen so that $\norm{f_0-\wt{f}_0}_{2,n} < \varepsilon_n/2$. If $f_0$ is a step function, $a_n=K_{f_0}$. If $f_0$ is a $\nu$-H\"{o}lder continuous function, $a_n$ is chosen by Lemma 3.2 of \cite{rockova2020posterior}, where replacing $C_0=C \left (\sum_{l=1}^{p} \norm{f_l}_{\mathcal{H}^\nu}\right)$ we get
\begin{equation}
\left (\frac{2C_0 q}{\varepsilon_n} \right)^{q/\nu} \le a_n \le \left (\frac{2C_0 q}{\varepsilon_n} \right)^{q/\nu} + 1
\end{equation}
$f_{\mT,{P^0_l}}(\x)$ is of the form \eqref{eq:tree_mapping_dirichlet} for some tree topology $\mT$ with $a_n$ leaves and $P^0_l=\{P_{kl}^0\}_{k=1}^{a_n}$ for $l=1,\dots,p$. We assume there exists some $\delta_0 > 0$ such that $\min{f_{0l}} > \delta_0$ for all $l=1,\dots,q$. This implies $P_{lk}^0 > \delta_0$ for all $l=1,\dots q$ and all $k=1,\dots,K$. Therefore by \eqref{eq:KL_bound}, we can bound the LHS of \eqref{dirichlet_prior_bound} from above by
\begin{align*}
C \pi(a_n) \Pi \left (P \in [0,1]^{a_np}: \norm{P-P^0}_1 \le \delta_0 \varepsilon_n^2/2 \right) 
\end{align*}
For the prior by \cite{chipman1998bayesian}, $C=1$  and for the prior by \cite{denison1998bayesian}, $C=\frac{1}{\C F_{a_n}\C} > (a_n d n)^{-a_n} > e^{-a_n \log a_n}$  (by Lemma 3.1 of \cite{rockova2020posterior}). By Corollary 5.2 of \cite{rockova2019theory} for the prior by \cite{chipman1998bayesian} and by proof of Theorem 4.1 of \cite{rockova2020posterior} for the prior by \cite{denison1998bayesian}, we can show $\pi(a_n) \ge e^{-a_n \log a_n}$. Thus for both priors,
\begin{equation}\label{eq:dirichlet_pi_bound}
    C\pi(a_n) > e^{-2a_n\log a_n}
\end{equation}
Since $P_k \sim \text{Dirichlet}(\alpha_1,\dots,\alpha_p)$ for all $k=1,\dots,K$ and $P_{lk}^0 > \delta_0$, for all $l=1,\dots,p$ and all $k=1,\dots,K$, we can bound  $\Pi \left (P \in [0,1]^{a_nq}: \norm{P-P^0}_1 \le \delta_0 \epsilon_n^2/2 \right)$ from above by
\begin{align}\label{eq:dirichlet_p_bound}
\Pi \left (P \in [0,1]^{a_np}: \norm{P-P^0}_\infty \le \frac{\delta_0 \epsilon_n^2}{2a_n p} \right) \gtrsim C_{\alpha} (\frac{\delta_0 \epsilon_n^2}{a_n p})^{a_n p},
\end{align}
where $C_{\alpha}$ is a constant that depends on the Dirichlet parameters $\alpha=(\alpha_1,\dots,\alpha_q)$. Combining \eqref{eq:dirichlet_pi_bound} and \eqref{eq:dirichlet_p_bound} completes the proof.
\end{proof}

\section{Generalized BART is Resistant to Overfitting}\label{parsimony} 
As a by-product of the theoretical results discussed in section \ref{sec:results}, we can the following statements which support the empirical observation that generalized Bayesian trees are resilient to overfitting.
\begin{itemize}
\item[(i)] Under the assumptions of Theorem \ref{theorem: step}  we have 
$
\Pi\left(K \gtrsim K_{f_0} \mid \Y^{(n)}\right) \to 0
$
in $\P_{f_0}^{(n)}$-probability, as $n,q \to \infty$.

\item[(ii)] Under the assumptions of Theorem \ref{theorem: monotone} we have 
$
\Pi\left(K \gtrsim n^{q/(2+q)} \mid \Y^{(n)}\right) \to 0
$
in $\P_{f_0}^{(n)}$-probability, as $n,q \to \infty$.

\item[(iii)] Under the assumptions of Theorem \ref{theorem: smooth}  we have 
$
\Pi\left(K \gtrsim n^{q/(2\nu+q)} \mid \Y^{(n)}\right) \to 0
$
in $\P_{f_0}^{(n)}$-probability, as $n,q \to \infty$.
\end{itemize}
\begin{proof}
The proofs of (i), (ii) and (iii) follow from Lemma 1 of \cite{ghosal2007convergence}, in conjunction with the proofs of Theorems \ref{theorem: step}, \ref{theorem: monotone} and \ref{theorem: smooth} respectively.
\end{proof}

\section{Additional Tables}\label{sec: supplemtal tables} 

\begin{table}[ht]
\centering
\caption{Regression: Package Performance Comparison on Simulated Data: Median Scaled MSE (Q1, Q3)}
\label{tab: regression_simulation_friedman_scaled_mse}
\begin{tabular}{@{}llll@{}}
\toprule
Example    & gbart                & bart                 & dbarts               \\ \midrule
Friedman 1 & 1.000 (1.000, 1.000) & 1.244 (1.190, 1.311) & 1.170 (1.150, 1.208) \\
Friedman 2 & 1.000 (1.000, 1.012) & 1.109 (1.050, 1.461) & 1.081 (1.000, 1.387) \\
Friedman 3 & 1.000 (1.000, 1.000) & 1.357 (1.249, 1.456) & 1.290 (1.211, 1.386) \\ \bottomrule
\end{tabular}
\end{table}

\begin{table}[ht]
\centering
\caption{Classification: Package Performance Comparison on Simulated Data: Median Scaled CE (Q1, Q3)}
\label{tab:classification_simulation_friedman_scaled_ce}
\begin{tabular}{@{}llll@{}}
\toprule
Example    & gbart                & bart                 & dbarts               \\ \midrule
Friedman 1 & 1.002 (1.000, 1.006) & 1.000 (1.000, 1.001) & 1.177 (1.131, 1.195) \\
Friedman 2 & 1.008 (1.004, 1.008) & 1.000 (1.000, 1.000) & 1.141 (1.112, 1.179) \\
Friedman 3 & 1.001 (1.000, 1.004) & 1.000 (1.000, 1.002) & 1.172 (1.139, 1.188) \\ \bottomrule
\end{tabular}
\end{table}

\begin{table}[ht]
\centering
\caption{Computation Time Comparison on Simulated Data (seconds): Median (Q1, Q3)}
\label{tab:compute_time}
\begin{tabular}{@{}llll@{}}
\toprule
Task           & gbart                      & bart                 & dbarts              \\ \midrule
Regression     & 1125.25 (1098.05, 1133.47) & 47.69 (47.33, 48.00) & 35.24 (35.06, 35.42) \\
Classification & 1620.22 (1609.57, 1635.40) & 53.70 (53.39, 53.90) & 35.89 (35.43, 36.13) \\ \bottomrule
\end{tabular}
\end{table}

\begin{table}
\centering
\caption{Scaled Median MSE (Q1, Q3) in Regression: Package Performance Comparison on Real Data.}
\label{tab:regression_datasets}
\begin{tabular}{@{}llllll@{}}
\toprule
Data       & $n$  & Source  & GBART                & BART-R               & dbarts               \\ \midrule
abalone    & 4177 & UCI     & 1.038 (1.036, 1.042) & 1.001 (1.000, 1.002) & 1.000 (1.000, 1.005) \\
bloodbrain & 96   & \texttt{caret}   & 1.065 (1.060, 1.100) & 1.002 (1.000, 1.015) & 1.000 (1.000, 1.029) \\
uscrime    & 47   & \texttt{MASS}    & 1.061 (1.000, 1.091) & 1.046 (1.000, 1.231) & 1.099 (1.078, 1.148) \\
wine red   & 1599 & UCI     & 1.019 (1.014, 1.026) & 1.000 (1.000, 1.008) & 1.009 (1.000, 1.022) \\
wine white & 4898 & UCI     & 1.063 (1.060, 1.065) & 1.000 (1.000, 1.000) & 1.003 (1.002, 1.010) \\ \bottomrule
\end{tabular}
\end{table}

\begin{table}
\centering
\caption{Scaled Median CE (Q1, Q3) in Classification: Package Performance Comparison on Real Data.}
\label{tab:classification_datasets}
\begin{tabular}{@{}llllll@{}}
\toprule
Data        & $n$  & Source & GBART                & BART-R               & dbarts                 \\ \midrule
biopsy       & 683  & \texttt{MASS}   & 1.886 (1.866, 2.335) & 1.000 (1.000, 1.000) & 11.314 (11.203, 11.606) \\
birthwt      & 189  & \texttt{MASS}   & 1.070 (1.070, 1.362) & 1.004 (1.003, 1.008) & 1.000 (1.000, 1.000)   \\
gallstone    & 320    & UCI    & 1.000 (1.000, 1.000) & 1.073 (1.062, 1.093) & 1.095 (1.081, 1.119)   \\
germancredit & 1000 & \texttt{caret}  & 1.072 (1.012, 1.083) & 1.001 (1.000, 1.004) & 1.000 (1.000, 1.002)   \\
segmentation & 2079 & \texttt{caret}  & 1.327 (1.324, 1.331) & 1.001 (1.000, 1.007) & 1.004 (1.000, 1.007)   \\ \bottomrule
\end{tabular}
\end{table}

\begin{table}
\centering
\caption{Scaled median deviance (Q1, Q3) by link function. Values are deviance divided by the minimum deviance between log and softplus for each run; the better link scores 1.000.}
\label{tab:scaled_deviance}
\begin{tabular}{cclcc}
\toprule
$P$ & $N$ & Example & Log & Softplus \\
\midrule
\multirow{3}{*}{5} & \multirow{3}{*}{500}
  & Friedman 1 & 1.012 (1.006, 1.017) & 1.000 (1.000, 1.000) \\
& & Friedman 2 & 1.000 (1.000, 1.004) & 1.015 (1.000, 1.037) \\
& & Friedman 3 & 1.010 (1.007, 1.014) & 1.000 (1.000, 1.000) \\
\midrule
\multirow{3}{*}{10} & \multirow{3}{*}{500}
  & Friedman 1 & 1.011 (1.004, 1.013) & 1.000 (1.000, 1.000) \\
& & Friedman 2 & 1.000 (1.000, 1.001) & 1.007 (1.001, 1.013) \\
& & Friedman 3 & 1.007 (1.003, 1.011) & 1.000 (1.000, 1.000) \\
\midrule
\multirow{12}{*}{20}
  & \multirow{3}{*}{100}
    & Friedman 1 & 1.000 (1.000, 1.002) & 1.009 (1.001, 1.020) \\
& & Friedman 2 & 1.006 (1.000, 1.010) & 1.000 (1.000, 1.040) \\
& & Friedman 3 & 1.013 (1.005, 1.029) & 1.000 (1.000, 1.000) \\
\cmidrule{2-5}
  & \multirow{3}{*}{500}
    & Friedman 1 & 1.004 (1.000, 1.012) & 1.000 (1.000, 1.002) \\
& & Friedman 2 & 1.000 (1.000, 1.004) & 1.005 (1.000, 1.021) \\
& & Friedman 3 & 1.008 (1.006, 1.013) & 1.000 (1.000, 1.000) \\
\cmidrule{2-5}
  & \multirow{3}{*}{1000}
    & Friedman 1 & 1.008 (1.004, 1.014) & 1.000 (1.000, 1.000) \\
& & Friedman 2 & 1.000 (1.000, 1.001) & 1.002 (1.000, 1.006) \\
& & Friedman 3 & 1.007 (1.003, 1.011) & 1.000 (1.000, 1.000) \\
\cmidrule{2-5}
  & \multirow{3}{*}{2000}
    & Friedman 1 & 1.010 (1.006, 1.013) & 1.000 (1.000, 1.000) \\
& & Friedman 2 & 1.000 (1.000, 1.005) & 1.017 (1.002, 1.019) \\
& & Friedman 3 & 1.010 (1.008, 1.012) & 1.000 (1.000, 1.000) \\
\midrule
\multirow{3}{*}{30} & \multirow{3}{*}{500}
  & Friedman 1 & 1.008 (1.004, 1.014) & 1.000 (1.000, 1.000) \\
& & Friedman 2 & 1.000 (1.000, 1.005) & 1.004 (1.000, 1.008) \\
& & Friedman 3 & 1.003 (1.000, 1.006) & 1.000 (1.000, 1.001) \\
\midrule
\multirow{3}{*}{50} & \multirow{3}{*}{500}
  & Friedman 1 & 1.013 (1.006, 1.015) & 1.000 (1.000, 1.000) \\
& & Friedman 2 & 1.008 (1.001, 1.018) & 1.000 (1.000, 1.000) \\
& & Friedman 3 & 1.013 (1.010, 1.017) & 1.000 (1.000, 1.000) \\
\bottomrule
\end{tabular}
\end{table}

\end{document}